    \declaretheorem{lemma}
    \declaretheorem{proposition}
    \declaretheoremstyle[qed=$\square$]{definitionwithend}
    \declaretheorem[style=definitionwithend]{remark}
\DeclareMathOperator{\DR}{DR}
\DeclareMathOperator{\SAA}{SAA}
\DeclareMathOperator{\dist}{dist}
\DeclareMathOperator{\intt}{int}
\DeclareMathOperator{\cl}{cl}
\DeclareMathOperator{\CVaR}{CVaR}
\newcommand{\vc}{\mathbf{c}}
\newcommand{\va}{\mathbf{a}}
\newcommand{\vr}{\mathbf{r}}
\newcommand{\vz}{\mathbf{z}} %
\newcommand{\vx}{\mathbf{x}} %
\newcommand{\vy}{\mathbf{y}} %
\newcommand{\vpi}{\bm{\pi}} %
\newcommand{\vmu}{\bm{\mu}} %
\newcommand{\vb}{\mathbf{b}} %
\newcommand{\vxi}{\bm{\xi}}
\newcommand{\bbE}{\mathbb{E}}
\newcommand{\bbN}{\mathbb{N}}
\newcommand{\bbP}{\mathbb{P}}
\newcommand{\bbR}{\mathbb{R}}
\newcommand{\R}{\mathbb{R}}
\newcommand{\cF}{\mathcal{F}}
\newcommand{\cS}{\mathcal{S}}
\newcommand{\cX}{\mathcal{X}}
\newcommand{\epr}{\hfill\hbox{\hskip 4pt \vrule width 5pt height 6pt depth 1.5pt}\vspace{0.0cm}\par}
\def\true{1}
\def\flagJournal{0}
\newcommand {\beqn}{\begin{equation}}\newcommand {\eeqn}{\end{equation}}
\newcommand {\beqan}{\begin{eqnarray}}\newcommand {\eeqan}{\end{eqnarray}}
\newcommand {\beqa}{\begin{eqnarray*}}\newcommand {\eeqa}{\end{eqnarray*}}
\newcommand{\skipit}  [1] {}
\newtheorem{theorem}{Theorem}%
\begin{document}
\title{Distributionally Robust Chance-Constrained Programs  with Right-Hand Side Uncertainty under Wasserstein Ambiguity}
\author{
	Nam Ho-Nguyen\thanks{Wisconsin Institute for Discovery, University of Wisconsin-Madison, Madison, WI 53715, \url{honguyen@wisc.edu}}
	\and
	Fatma K{\i}l{\i}n\c{c}-Karzan\thanks{Tepper School of Business, Carnegie Mellon University, Pittsburgh, PA 15213, USA, \url{fkilinc@andrew.cmu.edu}}
	\and 
	Simge K\"{u}\c{c}\"{u}kyavuz\thanks{Department of Industrial Engineering and Management Sciences, Northwestern University, Evanston, IL 60208, USA, \url{simge@northwestern.edu}}
	\and
	Dabeen Lee\thanks{Discrete Mathematics Group, Institute for Basic Science (IBS), Daejeon 34126, Republic of Korea, \url{dabeenl@ibs.re.kr}}
}	

\date{\today}

\maketitle

\begin{abstract}
We consider exact deterministic mixed-integer programming (MIP) reformulations of distributionally robust chance-constrained programs (DR-CCP) with random right-hand sides over Wasserstein ambiguity sets. The existing MIP formulations are known to have weak continuous relaxation bounds, and, consequently, for hard instances with small radius, or with %
large {problem sizes}, %
the branch-and-bound based solution processes suffer from large optimality gaps even after hours of computation time. This significantly hinders the practical application of the DR-CCP paradigm. Motivated by these challenges, we conduct a polyhedral study to strengthen these formulations. We reveal several hidden connections between DR-CCP and its nominal counterpart (the sample average approximation), mixing sets, and robust 0-1 programming. By exploiting these connections in combination, we provide an improved formulation and two classes of valid inequalities for DR-CCP. We test the impact of our results on a stochastic transportation problem numerically.  Our experiments demonstrate the effectiveness of our approach; in particular our improved formulation and proposed valid inequalities reduce the overall solution times remarkably.  Moreover, this allows us to significantly scale up the {problem sizes} %
that can be handled in such DR-CCP formulations {by reducing the solution times from hours to seconds}. %
\end{abstract}

\section{Introduction}\label{sec:intro}

We consider the following chance-constrained  program (CCP)
\begin{align}
\min_{\vx} \quad & \vc^\top \vx\notag\\ 
\text{s.t.}\quad & \bbP^*[\vxi \not\in \cS(\vx)] \leq \epsilon,\tag{CCP}\label{eq:ccp}\\
& \vx \in \mathcal{X},\notag
\end{align}
where $\vc\in \bbR^L$ is a cost vector, $\cX \subset \bbR^L$ is a compact domain for the decision variables $\vx$, $\cS(\vx) \subseteq \bbR^K$ is a decision-dependent safety set, $\vxi \in \bbR^K$ is a random variable with distribution $\bbP^*$, and $\epsilon \in (0,1)$ is the risk tolerance for the random variable $\vxi$ falling outside the safety set $\cS(\vx)$.

CCP is one of the most common models to handle uncertainty in optimization. Nevertheless, in practice, the distribution $\bbP^*$ in the chance constraint in \eqref{eq:ccp} is often unavailable to the optimizer. Instead, independent and identically distributed (i.i.d.) samples $\{ \vxi_i \}_{i \in [N]}$, where $[N]:=\{1,\ldots,N\}$, are drawn from $\bbP^*$, and $\bbP^*$ is approximated using the empirical distribution $\bbP_N$ on these samples. Such an approach is known as the \emph{sample average approximation} (SAA) of \eqref{eq:ccp}. Note that evaluating $\bbP^*[\vxi \not \in \cS(\vx)]$ exactly is often difficult even when $\bbP^*$ is available. Consequently,  the SAA approach is often employed whenever the computation of $\bbP^*[\vxi \not \in \cS(\vx)]$ is expensive, even if $\bbP^*$ is available. The SAA formulation of \eqref{eq:ccp} is
\begin{align}
\min\limits_{\vx} \quad & \vc^\top \vx\notag\\ 
\text{s.t.}\quad & \frac{1}{N} \sum_{i \in [N]} \bm{1}(\vxi_i \not\in \cS(\vx)) \leq \epsilon,\tag{SAA}\label{eq:saa-ccp}\\
& \vx \in \mathcal{X},\notag
\end{align}
where $\bm{1}(\cdot)$ is the indicator function. 
For certain forms of safety sets $\cS(\cdot)$, \eqref{eq:saa-ccp} can be reformulated as a  mixed-integer program (MIP) and thus off-the-shelf optimization solvers can be used to solve it.

While there are statistical guarantees for using \eqref{eq:saa-ccp} to approximate \eqref{eq:ccp} \cite{Cal1,campi,sample}, the out-of-sample performance of the solution from \eqref{eq:saa-ccp} is quite sensitive to the specific sample $\{ \vxi_i \}_{i \in [N]}$, and can result in high variance, particularly for small $N$. In order to remedy this and regularize the out-of-sample performance of \eqref{eq:saa-ccp}, one can solve a \emph{distributionally robust chance-constrained program}:
\begin{align}
\min\limits_{\vx} \quad & \vc^\top \vx\notag\\ 
\text{s.t.}\quad & \sup_{\bbP \in \cF_N(\theta)} \bbP[\vxi \not\in \cS(\vx)] \leq \epsilon\tag{DR-CCP},\label{eq:dr-ccp}\\
& \vx \in \mathcal{X},\notag
\end{align}
where $\cF_N(\theta)$ is an ambiguity set of distributions on $\bbR^K$ that contains the empirical distribution $\bbP_N$, and $\theta$ is a parameter that governs the size of the ambiguity set, and thereby the conservatism of \eqref{eq:dr-ccp}. For a recent comprehensive survey on distributionally robust optimization problems, their properties, and exact and approximate methods to solve them, we refer the reader to \citet{Rahimian2019DistributionallyRO} and references therein.

Several types of ambiguity sets of probability distributions, such as those based on moments, $\phi$-divergences, unimodality, or support have been studied in the literature; see e.g., \cite{EOO03,JG16,calafiore:jota06,Hanasusanto2015,Li2019,Xie18}. More recently, the \emph{Wasserstein} ambiguity set, that is, the Wasserstein distance ball of radius $\theta$ around the empirical distribution $\bbP_N$, is shown to possess particularly attractive statistical properties. The dual representation for the worst-case probability $\bbP[\vxi \not\in \cS(\vx)]$ under the Wasserstein ambiguity set $\bbP \in \cF_N(\theta)$ is given in \cite{gao2016distributionally,BlanchetMurthy2019,MohajerinEsfahaniKuhn2018}. Various studies~\cite{HotaEtAl2019,xie2018distributionally,chen2018data} exploit this dual representation to give a deterministic non-convex reformulation of \eqref{eq:dr-ccp}. See also \citet{HotaEtAl2019} for reformulations based on the conditional value-at-risk (CVaR) inner approximation of \eqref{eq:dr-ccp} for several different types of safety sets.

For common linear forms of safety sets $\cS(\cdot)$, \citet{chen2018data} and \citet{xie2018distributionally} show that \eqref{eq:dr-ccp} under Wasserstein ambiguity sets admits a MIP reformulation. 
\citet{JiLejeune2019} also explore MIP formulations of \eqref{eq:dr-ccp} under Wasserstein ambiguity. However, they impose additional structure on the support of $\vxi$, thus their formulations are different from \citet{chen2018data} and \citet{xie2018distributionally}. 
Such MIP reformulations pave the path of using standard optimization solvers to formulate and solve these problems, however, according to \cite{chen2018data,xie2018distributionally}  these MIP reformulations are  difficult to solve in certain cases. Even for relatively small size problems with only a few hundred samples and small radii, the resulting formulations still have a large optimality gap even after an hour of computation time with a commercial solver. 

\subsection{Contributions}

Motivated by these computational challenges, our focus in this paper is on developing effective methods to solve the exact reformulation of \eqref{eq:dr-ccp} for such hard cases. 
In particular, we closely examine the MIP reformulations of \eqref{eq:dr-ccp} with random right-hand side uncertainty under Wasserstein ambiguity sets from \citet{chen2018data} and \citet{xie2018distributionally}. We present theoretical results that have strong computational impact in solving \eqref{eq:dr-ccp}.

We first exploit the close relationship between \eqref{eq:dr-ccp} over Wasserstein ambiguity sets and its nominal counterpart \eqref{eq:saa-ccp} to identify an implied mixing set with a cardinality constraint over the existing binary variables in the \eqref{eq:dr-ccp} reformulation, which enables the use of existing inequalities for such a set. We use the established technique of {\emph{quantile strengthening}  \cite{luedtke2010integer,luedtke2014branch-and-cut}} to significantly reduce the big-$M$ constants of this mixing set, and then show how to adapt these to \eqref{eq:dr-ccp}. Our result also shows that existing mixing inequalities for \eqref{eq:saa-ccp} can be readily applied to \eqref{eq:dr-ccp}.

We further analyze the formulation with reduced coefficients obtained from the quantile strengthening. We exploit the conditional value-at-risk (CVaR) interpretation for \eqref{eq:dr-ccp} described by \citet{xie2018distributionally} to bound key variables in the formulation. Using these, we show that we can further improve our new formulation for \eqref{eq:dr-ccp} by eliminating a significant portion of constraints. Furthermore, the improved bounds we establish on variables allow us to identify a specific substructure in our improved formulation which arises in robust 0-1 programming \cite{atamturk2006robust}. Consequently, we show that we can directly use results from \citet{atamturk2006robust} to describe a class of valid inequalities for our improved formulation.

We then assess the computational impact of the improved formulation and the new valid inequalities on solving \eqref{eq:dr-ccp} on a class of stochastic transportation problems.
We observe that the improved formulation uniformly reduces solution times by at least an order of magnitude for any radius, {number of original decision variables} and number of scenarios. In the difficult small radius regime for $N=100$ scenarios, the original formulation cannot verify optimality within one hour, whereas our formulation solves within seconds. For the most difficult instances with small Wasserstein radius and $N=3000$ scenarios, the improved formulation results in less than $0.8\%$ average optimality gap, whereas the original formulation cannot even find a feasible solution for any of the tested instances.

To the best of our knowledge, our work is the first that examines these connections between traditional SAA, mixing sets, and robust 0-1 programming in the context of distributionally robust chance constraints, and demonstrates the huge computational effectiveness of these approaches for the difficult instances. It is interesting to note that distributionally robust optimization is a paradigm of modeling uncertainty that does not require a complete knowledge of the distribution as in \eqref{eq:saa-ccp}, but is also less conservative than robust optimization that considers the worst-case realizations of uncertain parameters, without any knowledge of their joint distribution. Nevertheless, our analysis and computational results show that the formulations for distributionally robust optimization can be significantly improved by employing its connections to both SAA and robust optimization.

\subsection{Outline} 
In Section \ref{sec:problem}, we provide a formal problem description and elaborate on the connections between the existing MIP models for \eqref{eq:dr-ccp} and the safety set.
In Section \ref{sec:nominal}, we explore the connection between \eqref{eq:saa-ccp} and \eqref{eq:dr-ccp}. We describe the mixing structure in \eqref{eq:saa-ccp} and demonstrate its application to \eqref{eq:dr-ccp}.
In Section \ref{sec:path-ineq}, we employ the CVaR representation of \eqref{eq:dr-ccp} to reveal the substructure from robust 0-1 programming that is hidden in MIP reformulations of \eqref{eq:dr-ccp}. Using results from \citet{atamturk2006robust}, we  provide valid inequalities resulting from this substructure.
In Section \ref{sec:experiments}, we report our computational experience with the improved formulation and the proposed inequalities on a class of stochastic transportation problems. 
	
\section{Problem formulation}\label{sec:problem}

We consider Wasserstein ambiguity sets $\cF_N(\theta)$ defined as the $\theta$-radius Wasserstein ball of distributions on $\bbR^K$ around  the empirical distribution $\bbP_N$. We will use the \emph{1-Wasserstein distance}, based on a norm $\|\cdot\|$, between two distributions $\bbP$ and $\bbP'$. This is defined as follows:
\[ d_W(\bbP,\bbP') := \inf_{\Pi} \left\{ \bbE_{(\vxi,\vxi') \sim \Pi}[\|\vxi - \vxi'\|] : \Pi \text{ has marginal distributions } \bbP, \bbP' \right\}. \]
Then, the Wasserstein ambiguity set is 
\[\cF_N(\theta) := \left\{ \bbP : d_W(\bbP_N,\bbP) \leq \theta\right\}. \] 
Given a decision $\vx \in \cX$ and random realization $\vxi \in \bbR^K$, the \emph{distance from $\vxi$ to the unsafe set} is
\begin{equation}\label{eq:distance}
\dist(\vxi,\cS(\vx)) := \inf_{\vxi' \in \bbR^K} \left\{ \|\vxi - \vxi'\| : \vxi' \not\in \cS(\vx) \right\}.
\end{equation}
Throughout Sections \ref{sec:problem}, \ref{sec:nominal} and \ref{sec:path-ineq}, we assume that the sample $\{\vxi_i\}_{i \in [N]}$, the risk tolerance $\epsilon \in (0,1)$ and the radius $\theta > 0$ are fixed. 
As short-hand notation, we denote the feasible regions of \eqref{eq:saa-ccp} and \eqref{eq:dr-ccp} as follows:
\begin{subequations}\label{eq:feasible-regions}
\begin{align}
\cX_{\SAA}(\cS) &:= \left\{ \vx \in \cX :~ \frac{1}{N} \sum_{i \in [N]} \bm{1}(\vxi_i \not\in \cS(\vx)) \leq \epsilon \right\},\label{eq:saa-region}\\
\cX_{\DR}(\cS) &:= \left\{ \vx \in \cX :~ \sup_{\bbP \in \cF_N(\theta)} \bbP[\vxi \not\in \cS(\vx)] \leq \epsilon \right\}.\label{eq:dr-ccp-region}
\end{align}
\end{subequations}
Note that here the dependence on the safety set  function $\cS$ is made explicit, since the relationship between existing formulations and our new valid inequalities depends on the safety set. 

Using tools from duality theory for Wasserstein distributional robustness \cite{BlanchetMurthy2019,gao2016distributionally}, \citet{chen2018data} and \citet{xie2018distributionally} give an extended formulation for the distributionally robust chance constraint in \eqref{eq:dr-ccp}. Specifically, it was shown in \cite[Theorem 3]{chen2018data} (see also \cite[Proposition 1]{xie2018distributionally}) that when $\cS(\vx)$ is open for each $\vx \in \cX$ and $\theta > 0$, we have
\begin{align}\label{eq:cc-distance-formulation}
\cX_{\DR}(\cS) = \left\{ \vx \in \cX : \begin{aligned}
&\quad \exists\  t \geq 0, \ \vr \geq \bm{0},\\
&\quad \dist(\vxi_i,\cS(\vx)) \geq t - r_i, \ i \in [N],\\
&\quad \epsilon\, t \geq \theta + \frac{1}{N} \sum_{i \in [N]} r_i
\end{aligned} \right\}.
\end{align}
(Note that a similar formulation holds when $\theta = 0$, but we need to make the restriction $t > 0$.) Therefore, the ability to model \eqref{eq:dr-ccp} depends on the ability to model the constraints $\dist(\vxi_i,\cS(\vx)) \geq t - r_i$. For a given set $\cS(\vx)$, let $\intt \cS(\vx)$ denote its interior and $\cl \cS(\vx)$ denote its closure. 
Also, note that \citet[Proposition 3]{gao2016distributionally} show 
\[\sup_{\bbP \in \cF(\theta)} \bbP[\vxi \not\in \intt \cS(\vx)] = \sup_{\bbP \in \cF(\theta)} \bbP[\vxi \not\in \cS(\vx)] = \sup_{\bbP \in \cF(\theta)} \bbP[\vxi \not\in \cl \cS(\vx)],\] and $\dist(\vxi, \intt \cS(\vx)) = \dist(\vxi, \cS(\vx)) = \dist(\vxi, \cl \cS(\vx))$. Therefore, \eqref{eq:cc-distance-formulation} holds regardless of whether $\cS(\vx)$ is open or closed. With this in mind, in what follows, we express $\cS(\vx)$ as an open set, for convenience. 

The classical literature on CCP typically considers two types of safety sets which are  defined by linear inequalities and are known to admit exact MIP reformulations: \emph{individual chance constraints with left-hand side (LHS) uncertainty} and \emph{joint chance constraints with right-hand side (RHS) uncertainty}. In this paper, we consider only joint chance constraints with RHS uncertainty, which have safety set and distance function given by
\begin{subequations}\label{eq:joint-safety}
\begin{align}
\cS(\vx) &:= \left\{ \vxi :~  \vb_p^\top \vxi + d_p - \va_p^\top \vx > 0, \ p \in [P] \right\},  \\
\dist(\vxi,\cS(\vx)) & = \max\left\{ 0,\ \min_{p \in [P]} \frac{\vb_p^\top \vxi + d_p - \va_p^\top \vx}{\|\vb_p\|_*} \right\},
\end{align}
\end{subequations}
for given $\va_p\in \bbR^K,  \vb_p\in \bbR^L$ and $d_p\in \bbR$ for all $p \in [P]$ where $\|\cdot\|_*$  is the dual norm.
\citet[Proposition 2]{chen2018data} show that, in this case, \eqref{eq:dr-ccp} can be reformulated as
\begin{subequations}\label{eq:joint}
\begin{align}
\min\limits_{\vz, \vr, t, \vx} \quad & \vc^\top \vx\label{joint:obj}\\
\text{s.t.}\quad & \vz \in \{0,1\}^N,\ t \geq 0, \ \vr \geq \bm{0},\ \vx \in \mathcal{X},\label{joint:vars}\\
& \epsilon\, t \geq \theta + \frac{1}{N} \sum_{i \in [N]} r_i,\label{joint:conic}\\
& M (1-z_i) \geq t-r_i, \quad i \in [N],\label{joint:bigM1}\\
& \frac{\vb^\top_p {\vxi}_i + d_p -\va^\top_p \vx}{\|\vb_p \|_*}+ M z_i\geq t-r_i, \quad i \in [N],\ p \in [P],\label{joint:bigM2}
\end{align}
\end{subequations}
where $M$ is a sufficiently large positive constant. In particular, for joint chance constraints with RHS uncertainty, we have
\begin{equation}\label{eq:joint-exact}
\cX_{\DR}(\cS) = \left\{ \vx \in \cX : \text{\eqref{joint:vars}--\eqref{joint:bigM2}} \right\} .
\end{equation}
	
\section{Connection with the nominal chance constraint and mixing sets}\label{sec:nominal}

Our first idea to strengthen the MIP reformulations of distributionally robust CCPs originates from the following simple observation between the relation of the empirical probability distribution and the Wasserstein ambiguity set. 

When the radius $\theta$ of the Wasserstein ambiguity set $\cF_N(\theta)$ is 0,~\eqref{eq:dr-ccp} coincides with~\eqref{eq:saa-ccp} since $\cF_N(0)=\left\{\bbP_N\right\}$. In general, as $\cF_N(0)\subseteq \cF_N(\theta)$ for any $\theta\geq0$, \eqref{eq:saa-ccp} is a relaxation of \eqref{eq:dr-ccp}, i.e., we have
\[ \cX_{\DR}(\cS) \subseteq \cX_{\SAA}(\cS). \]
When the safety set is defined as $\cS(\vx) = \left\{ \vxi :~ s(\vx,\vxi) \geq 0\right\}$ for a continuous function $s(\cdot)$, {\citet{ruspp:02} shows} that~\eqref{eq:saa-ccp} can be reformulated as the following MIP:
\begin{subequations}\label{eq:saa-reformulation}
	\begin{align}
	\min\limits_{\vz, \vr, t, \vx} \quad & \vc^\top \vx\label{saa:obj}\\
	\text{s.t.}\quad & \vz \in \{0,1\}^N,\ \vx \in \mathcal{X},\label{saa:vars}\\
	& \sum_{i\in [N]} z_i \leq \lfloor\epsilon N\rfloor, \label{saa:knapsack}\\
	& s(\vx,\vxi_i) + M z_i \geq 0, \quad i \in [N],\label{saa:bigM}
	\end{align}
\end{subequations}
where $M$ is a sufficiently large positive constant. Inequalities~\eqref{saa:knapsack} and~\eqref{saa:bigM} are often referred to as the {\it knapsack (or cardinality) constraint} and the {\it big-$M$ constraints}, respectively. Then, when the safety set $\cS(\vx)$ is given by $\cS(\vx) = \left\{ \vxi :~ s(\vx,\vxi) > 0\right\}$,~\eqref{eq:saa-reformulation} provides a relaxation of~\eqref{eq:dr-ccp}. Therefore, by solving~\eqref{eq:saa-reformulation}, one can provide a lower bound on the optimum value of~\eqref{eq:dr-ccp}. Note that the safety set \eqref{eq:joint-safety} can be written in this form by defining $s(\cdot)$ appropriately. 

The formulation~\eqref{eq:saa-reformulation} for~\eqref{eq:saa-ccp} is well-studied in the literature, and many classes of valid inequalities for the formulation have been developed; see e.g., \cite{abdi2016mixing-knapsack,Kilinc-Karzan2019joint-sumod,kucukyavuz2012mixing,LKL16,liu2018intersection,luedtke2010integer,luedtke2014branch-and-cut,xie2018quantile,zhao2017joint-knapsack}. 
In fact, we develop a more direct connection between  formulation~\eqref{eq:saa-reformulation} and~\eqref{eq:dr-ccp} so that we can apply techniques for solving~\eqref{eq:saa-reformulation} directly to~\eqref{eq:dr-ccp}. It is clear that inequalities of the form~\eqref{saa:knapsack}--\eqref{saa:bigM} can be added to the MIP formulation \eqref{eq:joint} of~\eqref{eq:dr-ccp} to obtain a stronger formulation. In turn, this implies that the reformulation can be further strengthened by the inequalities developed for strengthening~\eqref{eq:saa-reformulation}. If we do this na\"{i}vely, we would add \eqref{saa:knapsack}--\eqref{saa:bigM} to \eqref{eq:joint} with \emph{new} binary variables $\vz' \in \{0,1\}^N$. Our first key result is that \emph{the same binary variables $\vz$} from the MIP formulation \eqref{eq:joint} can be used to define \eqref{saa:knapsack}--\eqref{saa:bigM}. This then means that we can strengthen these formulations \emph{without} adding any additional binary variables.

\subsection{Strengthening the formulation by the nominal chance constraint}

We now verify that the SAA inequalities for the joint chance constraint of~\eqref{eq:dr-ccp-region} can be used to strengthen the formulation~\eqref{eq:joint}. Given $\cS(\vx)$ defined by~\eqref{eq:joint-safety}, consider the following MIP formulation
\begin{subequations}\label{eq:joint-knapsack}
	\begin{align}
	\min\limits_{\vx,\vz, \vr, t} \quad & \vc^\top \vx\label{joint-k:obj}\\
	\text{s.t.}\quad & (\vz,\vr,t,\vx)\ \text{satisfies} \ \eqref{joint:vars}\text{--}\eqref{joint:bigM2},\label{joint-k:basic}\\
	& \sum_{i\in [N]} z_i \leq \lfloor \epsilon N \rfloor, \label{joint-k:knapsack}\\
	& \frac{\vb^\top_p {\vxi}_i + d_p -\va^\top_p \vx}{\|\vb_p \|_*}+ M z_i\geq 0, \quad i \in [N],\ p \in [P],\label{joint-k:bigM3}
	\end{align}
\end{subequations}
where $M$ is a sufficiently large positive constant. (Note that in this formulation, the inequality \eqref{joint-k:knapsack} is equivalent to~\eqref{saa:knapsack}.) We can write constraints~\eqref{joint-k:bigM3} in the form~\eqref{saa:bigM}: individually $s_p(\vx,\vxi_i) := \frac{\vb_p^\top \vxi_i + d_p -\va_p^\top \vx}{\|\vb_p \|_*}$ or jointly $s(\vx,\vxi) := \min\limits_{p\in[P]}\left\{\frac{\vb^\top_p {\vxi}_i + d_p -\va^\top_p \vx}{\|\vb_p \|_*}\right\}$. More importantly,~\eqref{joint-k:knapsack} and~\eqref{joint-k:bigM3} share the same set of binary variables as the other constraints in~\eqref{eq:joint}. We next argue that this MIP formulation~\eqref{eq:joint-knapsack} is an exact reformulation of~\eqref{eq:dr-ccp} with a safety set $\cS(\vx)$ from~\eqref{eq:joint-safety}. 

\begin{theorem}\label{thm:joint-knapsack}
When the safety set $\cS$ is defined as in \eqref{eq:joint-safety}, formulation \eqref{eq:joint-knapsack} is an exact reformulation of~\eqref{eq:dr-ccp}, i.e.,
\begin{equation}
 \cX_{\DR}(\cS) = \left\{ \vx \in \cX : \text{\eqref{joint-k:basic}--\eqref{joint-k:bigM3}} \right\}.
\end{equation}	
\end{theorem}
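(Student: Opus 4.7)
Proof plan. Let $A := \{\vx \in \cX : \text{\eqref{joint-k:basic}--\eqref{joint-k:bigM3}}\}$. We want $A = \cX_{\DR}(\cS)$. The inclusion $A \subseteq \cX_{\DR}(\cS)$ is immediate: the constraint \eqref{joint-k:basic} alone is \eqref{joint:vars}--\eqref{joint:bigM2}, so by the exact MIP reformulation \eqref{eq:joint-exact} any $\vx$ that lifts to a feasible point of \eqref{eq:joint-knapsack} is in $\cX_{\DR}(\cS)$. The real content is the reverse inclusion, which amounts to showing that for every $\vx \in \cX_{\DR}(\cS)$ one can construct $(\vz,\vr,t)$ that jointly satisfies the old distance-based constraints \emph{and} the newly imposed SAA-style constraints \eqref{joint-k:knapsack}, \eqref{joint-k:bigM3}, using the \emph{same} binary vector $\vz$.

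The key bridge is the elementary observation $\cX_{\DR}(\cS) \subseteq \cX_{\SAA}(\cS)$, recorded at the start of Section \ref{sec:nominal}. Fix $\vx \in \cX_{\DR}(\cS)$, and define the ``violated'' index set
\[ I := \{ i \in [N] : \vxi_i \notin \cS(\vx) \}, \]
so that $|I| \leq \lfloor \epsilon N \rfloor$ by the SAA feasibility of $\vx$. Choose $z_i = 1$ for $i \in I$ and $z_i = 0$ otherwise. Then \eqref{joint-k:knapsack} holds by construction, and \eqref{joint-k:bigM3} holds as well: for $i \in I$ it is trivial when $M$ is large, while for $i \notin I$ one has $\vxi_i \in \cS(\vx)$, hence $\vb_p^\top \vxi_i + d_p - \va_p^\top \vx > 0$ for every $p$ and so $s_p(\vx,\vxi_i) \geq 0$.

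It remains to produce $(t,\vr)$ so that this same $\vz$ also satisfies \eqref{joint:conic}--\eqref{joint:bigM2}. Since $\vx \in \cX_{\DR}(\cS)$, the distance-based representation \eqref{eq:cc-distance-formulation} yields some $t \geq 0$ and $\vr \geq \bm{0}$ with $\dist(\vxi_i,\cS(\vx)) \geq t - r_i$ for all $i$ and $\epsilon t \geq \theta + \frac{1}{N}\sum_i r_i$. Replacing $r_i$ by the minimal admissible value $\tilde r_i := \max\{0,\,t - \dist(\vxi_i,\cS(\vx))\}$ preserves both constraints (the conic one only gets easier). With this choice, for $i \notin I$ we have $\dist(\vxi_i,\cS(\vx)) = \min_p s_p(\vx,\vxi_i) > 0$ because $\vxi_i \in \cS(\vx)$, so $s_p(\vx,\vxi_i) \geq \dist(\vxi_i,\cS(\vx)) \geq t - \tilde r_i$ for all $p$, verifying \eqref{joint:bigM2}; \eqref{joint:bigM1} reduces to $M \geq t - \tilde r_i$, which holds since $t$ is bounded and $M$ is large. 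For $i \in I$ both \eqref{joint:bigM1} and \eqref{joint:bigM2} are inactive because $z_i = 1$.

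The only delicate step I anticipate is checking that switching from the original feasible $\vr$ to the minimal choice $\tilde r_i$ does not break \eqref{joint:conic}, which is really where the SAA inequality $|I| \leq \lfloor \epsilon N \rfloor$ gets absorbed into the Wasserstein formulation; but since $\tilde r_i \leq r_i$ termwise, the inequality is only relaxed. Everything else is bookkeeping involving the identity $\dist(\vxi_i,\cS(\vx)) = \min_p s_p(\vx,\vxi_i)$ when $\vxi_i \in \cS(\vx)$ and the trivial domination of both big-$M$ constraints when $z_i = 1$.
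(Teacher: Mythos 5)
Your proposal is correct in substance but takes a genuinely different route from the paper. You argue ``top--down'': starting from an abstract $\vx \in \cX_{\DR}(\cS)$, you import the cardinality bound $|I| \leq \lfloor \epsilon N\rfloor$ from the inclusion $\cX_{\DR}(\cS) \subseteq \cX_{\SAA}(\cS)$, and then build a fresh certificate $(\vz,\tilde\vr,t)$ from the distance representation \eqref{eq:cc-distance-formulation} with the minimal slacks $\tilde r_i = \max\{0, t-\dist(\vxi_i,\cS(\vx))\}$. The paper instead argues ``bottom--up'': it takes an arbitrary feasible point $(\vx,\vz,\vr,t)$ of \eqref{eq:joint}, keeps $(\vr,t)$ untouched, redefines only the binaries as $\bar z_i = \bm{1}(\min_p s_p(\vx,\vxi_i)<0)$, checks that $\min\{s(\vx,\vxi_i)+M\bar z_i,\,M(1-\bar z_i)\}$ can only increase, and then extracts \eqref{joint-k:knapsack} \emph{directly} from the conic constraint \eqref{joint:conic} via $r_i/t \geq \bar z_i$ (using $t>0$, which follows from $\theta>0$). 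The paper's version has the advantage of never re-deriving feasible $(\vr,t)$ and of exposing precisely which constraint of the MIP forces the knapsack inequality; your version is more conceptual and leans on the $\cX_{\DR}\subseteq\cX_{\SAA}$ inclusion stated at the top of Section~\ref{sec:nominal}. Your minimal choice $\tilde\vr$ is legitimate and in fact reappears in the paper's later arguments (Lemma~\ref{lemma:t-bound}, Proposition~\ref{prop:u_p-bound}).

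One slip to fix: for $i \in I$ you assert that \eqref{joint:bigM1} is ``inactive because $z_i=1$.'' It is the opposite: setting $z_i=1$ makes \eqref{joint:bigM1} read $0 \geq t - r_i$, which is its \emph{most} restrictive form (it is \eqref{joint:bigM2} that is deactivated by $z_i=1$). The constraint is nevertheless satisfied by your construction, since $i\in I$ gives $\dist(\vxi_i,\cS(\vx))=0$ and hence $\tilde r_i = \max\{0,t-0\}=t$, so $t-\tilde r_i = 0$; but you need to say this explicitly rather than dismiss the constraint. A second, very minor point: on the boundary case $\min_p s_p(\vx,\vxi_i)=0$ you set $z_i=1$ (since $\cS(\vx)$ is open) while the paper sets $\bar z_i = 0$; both work, and your larger index set is still covered by the SAA bound $|I|\leq\lfloor\epsilon N\rfloor$.
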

\begin{proof}
By~\eqref{eq:joint-exact}, it is sufficient to show that 
\[\left\{ \vx \in \cX : \text{\eqref{joint:vars}--\eqref{joint:bigM2}} \right\}=\left\{ \vx \in \cX : \text{\eqref{joint-k:basic}--\eqref{joint-k:bigM3}} \right\}.\] 
By~\eqref{joint-k:basic}, we know that the set on the right-hand side is contained in the set on the left-hand side. 
Let us show that the reverse direction also holds. To this end, take a vector $\vx\in\cX$ satisfying~\eqref{joint:vars}--\eqref{joint:bigM2} with some $\vz,\vr,t$. For ease of notation, we define $s(\vx,\vxi_i):=\min\limits_{p\in[P]}\left\{\frac{\vb^\top_p {\vxi}_i + d_p -\va^\top_p \vx}{\|\vb_p \|_*}\right\}$. We claim that $(\vx,\bar\vz,\vr,t)$ satisfies~\eqref{joint-k:basic}--\eqref{joint-k:bigM3} where $\bar\vz\in\{0,1\}^N$ is a vector such that $\bar z_i=1$ if and only if $s(\vx,\vxi_i)<0$
for all $i\in[N]$. Since $M$ is sufficiently large so that $s(\vx,\vxi_i)+M\geq 0$, it is clear that $\vx,\bar\vz$ satisfy~\eqref{joint-k:bigM3}. Next, we observe that $(\vx,\bar\vz,\vr,t)$ satisfies~\eqref{joint:bigM1} and~\eqref{joint:bigM2}, which can be equivalently rewritten as \[\min\left\{s(\vx,\vxi_i)+Mz_i,\, M(1-z_i)\right\}\geq t-r_i, \quad i \in [N].\]
By the choice of $\bar\vz$ and $M$, we have $\min\left\{s(\vx,\vxi_i)+M\bar z_i,\, M(1-\bar z_i)\right\}$ is equal to 0 if $s(\vx,\vxi_i)<0$, and it is equal to $s(\vx,\vxi_i)$ otherwise. Hence,
\[\min\left\{s(\vx,\vxi_i)+M\bar z_i,\, M(1-\bar z_i)\right\} %
\geq \min\left\{s(\vx,\vxi_i)+Mz_i,\, M(1-z_i)\right\}\]
for any $z_i\in\{0,1\}$. This implies that $(\vx,\bar\vz,\vr,t)$ satisfies~\eqref{joint:bigM1} and~\eqref{joint:bigM2} as they are satisfied already by $(\vx,\vz,\vr,t)$. To finish the proof, it remains to show that $\bar\vz$ satisfies~\eqref{joint-k:knapsack}. Since~$\theta>0$, we obtain from~\eqref{joint:conic} that $t>0$. This implies that $\frac{r_i}{t}\geq0$. We claim that $\frac{r_i}{t}\geq \bar z_i$ for all $i\in[N]$. As $\frac{r_i}{t}\geq 0$, we have $\frac{r_i}{t}\geq \bar z_i$ holds when $\bar z_i=0$. When $\bar z_i=1$, by rearranging~\eqref{joint:bigM1} we get $\frac{r_i}{t}\geq 1=\bar z_i$. Since $\frac{r_i}{t}\geq \bar z_i$ for all $i\in[N]$, it follows from~\eqref{joint:conic} that $\epsilon N\geq \sum_{i\in[N]}\frac{r_i}{t}\geq\sum_{i\in[N]}\bar z_i$, implying in turn that $\sum_{i\in[N]}\bar z_i\leq \lfloor\epsilon N\rfloor$, as required.
\ifx\flagJournal\true \qed \fi
\end{proof}

\begin{remark}\label{rem:bigM-joint}
\citet{chen2018data} argue that there is a finite value of $M$ for the validity of~\eqref{eq:joint}. Essentially, we need to choose an appropriate value of $M$ so that~\eqref{joint:bigM1}--\eqref{joint:bigM2} correctly represent the constraint $\dist(\vxi_i,\cS(\vx)) \geq t - r_i$. When $\vb^\top_p {\vxi}_i + d_p -\va^\top_p \vx<0$ for some $p\in[P]$, we have $\dist(\vxi_i,\cS(\vx))=0$ by~\eqref{eq:joint-safety} and thus $\dist(\vxi_i,\cS(\vx)) \geq t - r_i$ becomes $0\geq t-r_i$. As long as $\frac{\vb^\top_p {\vxi}_i + d_p -\va^\top_p \vx}{\|\vb_p \|_*} + M\geq 0$ holds for all $\vx\in\cX$, \eqref{joint:bigM1}--\eqref{joint:bigM2} can capture this situation by setting $z_i=1$. Similarly,~\eqref{joint:bigM1}--\eqref{joint:bigM2} can represent the case when $\vb^\top_p {\vxi}_i + d_p -\va^\top_p \vx\geq 0$ for all $p\in[P]$ if $M\geq \vb^\top_p {\vxi}_i + d_p -\va^\top_p \vx$ for all $\vx\in\cX$, $p\in[P]$. So, setting
\begin{equation}\label{eq:bigMvalue-joint}
M:=\max_{\vx\in\cX,\ p\in[P]}\left\{\frac{\left|\vb^\top_p {\vxi}_i + d_p -\va^\top_p \vx\right|}{\|\vb_p \|_*}\right\}
\end{equation}
ensures that the constraints~\eqref{joint-k:bigM3} are indeed valid.	
\ifx\flagJournal\true \epr \fi
\end{remark}

\subsection{Mixing substructure}\label{sec:mixing}

Although~\eqref{eq:joint-knapsack} is already stronger than~\eqref{eq:joint} due to the additional constraints~\eqref{joint-k:bigM3}--\eqref{joint-k:knapsack}, we can further strengthen this formulation by adding more valid inequalities originating from these constraints. To this end, for any fixed $p$, the constraints~\eqref{joint-k:bigM3}--\eqref{joint-k:knapsack} give rise to the following substructure:
\begin{equation}
Q_p:=\left\{ (\vx,\vz)\in \cX\times\{0,1\}^N : \begin{aligned}
\ &s_p(\vx,\vxi_i) + M z_i \geq 0, \quad i \in [N],\\
& \sum_{i\in [N]} z_i \leq \lfloor\epsilon N\rfloor
\end{aligned} \right\}.\label{eq:mixingset}
\end{equation}
So, one can generate valid inequalities for the formulation~\eqref{eq:joint-knapsack} by finding inequalities of the form $\vmu^\top \vx + \vpi^\top \vz\geq\beta$ that are valid for the mixed-integer set $Q_p$ in~\eqref{eq:mixingset}. \citet{luedtke2010integer} and \citet{luedtke2014branch-and-cut} introduce a procedure of generating inequalities that are valid for $Q_p$. In order to make our paper self-contained, we next explain this procedure.

Given a fixed linear function $\vmu^\top\vx$, we solve the following single scenario subproblem for each scenario $i\in[N]$
\begin{equation}
\bar h_i(\vmu):=\min\left\{\vmu^\top\vx:\ s_p(\vx,\vxi_i)\geq0, \ \vx\in \bar\cX\right\},\label{eq:single-subproblem}
\end{equation}
where $\cX \subseteq \bar \cX \subseteq \bbR^L$. Then $\vmu^\top\vx\geq \bar h_i(\vmu)$ holds for $(\vx,\vz)\in Q_p$ with $z_i=0$. Having computed the values $\bar h_i(\vmu)$ for $i\in[N]$, we sort them in non-decreasing order. Without loss of generality, we may assume that
\[
\bar h_N(\vmu)\geq\bar  h_{N-1}(\vmu)\geq \cdots\geq  \bar h_1(\vmu).
\]
For ease of notation, let 
\[k:=\lfloor\epsilon N\rfloor.\]
Notice that because $\sum_{i\in [N]} z_i \leq k$ is also enforced in $Q_{p}$, by the pigeonhole principle there must exist $i \in \{N-k,N-k+1,\ldots,N\}$ with $z_i=0$. In turn, this implies that $\vmu^\top\vx \geq \bar h_{N-k}(\vmu)$ because $\bar h_i(\vmu)\geq \bar h_{N-k}(\vmu)$ for all $i\geq N-k$. In summary, we have just argued that $\vmu^\top\vx\geq \bar h_i(\vmu)$ holds if $z_i=0$ and that $\vmu^\top\vx\geq \bar  h_{N-k}(\vmu)$ is satisfied always, in particular, when $z_i=1$ for $i\in [N]$. Equivalently,
\begin{equation}
\vmu^\top\vx + \left(\bar h_i(\vmu)-\bar h_{N-k}(\vmu)\right)z_i\geq\bar  h_i(\vmu)\label{eq:mixing-base}
\end{equation}
is valid.
In fact, inequalities~\eqref{eq:mixing-base} for $i\leq N-k$ are redundant because $\vmu^\top\vx\geq \bar  h_i(\vmu)$ is implied by $\vmu^\top\vx\geq \bar  h_{N-k}(\vmu)$ if $i\leq N-k$. 
Because inequalities~\eqref{eq:mixing-base} share a common linear function $\vmu^\top\vx$ but each one has a distinct integer variable, the {\it mixing procedure} of~\citet{gunluk2001mixing}, can be applied to obtain stronger inequalities. For any $J=\{j_1,\ldots,j_\ell\}$ with $N\geq j_1\geq\cdots\geq j_\ell\geq N-k+1$, the {\it mixing inequality} derived from $J$ and~\eqref{eq:mixing-base} is
\begin{equation}
\vmu^\top\vx + \sum_{i\in[\ell]}\left(\bar h_{j_i}(\vmu)-\bar h_{j_{i+1}}(\vmu)\right)z_{j_i}\geq \bar h_{j_1}(\vmu),\label{eq:mixing}
\end{equation}
where $j_{\ell+1}:=N-k$. The mixing inequalities of the form~\eqref{eq:mixing} are equivalent to the {\it star inequalities} by~\citet{atamturk2000mixed}. The inequalities \eqref{eq:mixing} are the strongest possible ones that can be generated from~\eqref{eq:mixing-base} in that the convex hull of solutions $(\vx,\vz)\in\bbR^L\times\{0,1\}^N$ satisfying~\eqref{eq:mixing-base} is described by~\eqref{eq:mixing}~\cite{atamturk2000mixed,gunluk2001mixing,Kilinc-Karzan2019joint-sumod}.

Consider any $p\in[P]$. For $s_p(\vx,\vxi_i) := \frac{1}{\|\vb_p \|_*}\left(\vb^\top_p {\vxi}_i + d_p -\va^\top_p \vx\right)$ as in~\eqref{eq:joint-knapsack}, we can take $-\frac{1}{\|\vb_p \|_*}\va_p$ for $\vmu$. For this choice of $\vmu$, the value of $\bar h_i(\vmu)$ from the single scenario subproblem~\eqref{eq:single-subproblem} with $\bar\cX=\R^L$ is precisely 
\[
\min_x\left\{-\frac{\va^\top_p}{\|\vb_p \|_*}\vx:\ \frac{1}{\|\vb_p \|_*}\left(\vb^\top_p {\vxi}_i + d_p -\va^\top_p \vx\right)\geq0\right\}=-\frac{\vb^\top_p {\vxi}_i + d_p}{\|\vb_p \|_*}.
\]
So, assuming $-\vb^\top_p {\vxi}_N\geq\cdots\geq -\vb^\top_p {\vxi}_1$ and letting \[q_p:=-\vb_p^\top\vxi_{N-k}\] ($q_p$ is the $(k+1)$-th largest value), the inequalities $\vmu^\top\vx\geq \bar h_{N-k}(\vmu)$ and~\eqref{eq:mixing-base} for $\vmu=-\frac{1}{\|\vb_p \|_*}\va_p$ correspond to
\begin{subequations}\label{eq:mixing-base1}
\begin{align}
&\frac{-q_p + d_p -\va^\top_p \vx}{\|\vb_p \|_*} \geq 0,&\label{eq:mixing-base-quantile}\\
&\frac{\vb^\top_p \vxi_i + d_p -\va^\top_p \vx}{\|\vb_p \|_*} + \frac{-\vb^\top_p {\vxi}_i-q_p}{\|\vb_p \|_*}z_i \geq 0,& i\in[N].\label{eq:mixing-base-rhs}
\end{align}
\end{subequations}
Moreover, the mixing inequalities~\eqref{eq:mixing} obtained from~\eqref{eq:mixing-base} have the following form:
\begin{equation}\label{eq:mixing-rhs}
\frac{\vb^\top_p {\vxi}_{j_1} + d_p -\va^\top_p \vx}{\|\vb_p \|_*} + \sum_{i\in[\ell]}\frac{-\vb^\top_p {\vxi}_{j_i} + \vb^\top_p {\vxi}_{j_{i+1}}}{\|\vb_p \|_*}  z_{j_i} \geq 0
\end{equation}
where $N\geq j_1\geq\cdots\geq j_\ell\geq N-k+1$ and $j_{\ell+1}:=N-k$.

The number of mixing inequalities~\eqref{eq:mixing-rhs} is exponential, but they can be separated in $O(N\log N)$ time~\cite{gunluk2001mixing,Kilinc-Karzan2019joint-sumod}.

Notice that the inequalities~\eqref{eq:mixing-base}--\eqref{eq:mixing-rhs} are \emph{big-$M$-free}; the coefficients of the binary variables and the right-hand sides depend only on~$s_p(\vx,\vxi_i)$. This is important for practical purposes. In particular, when %
$M$ is larger than $\frac{-\vb^\top_p {\vxi}_i -q_p}{\|\vb_p \|_*}$, the constraints~\eqref{joint-k:bigM3} are dominated by the inequalities~\eqref{eq:mixing-base-rhs}, and thus, by~\eqref{eq:mixing-rhs}.

\subsection{Reducing big-$M$ values}\label{sec:M-reduction}

In Section~\ref{sec:mixing}, we argued that letting $q_p$ be the $(k+1)$-th largest value amongst $\{-\vb_p^\top \vxi_i\}_{i \in [N]}$, the inequalities~\eqref{eq:mixing-base1} are valid for~\eqref{eq:joint-knapsack}, and based on this we generate the mixing inequalities~\eqref{eq:mixing-rhs}. In fact, we can replace the big-$M$ in \eqref{joint:bigM2} with its strengthened version from \eqref{eq:mixing-base-rhs} to obtain a new formulation:
\begin{subequations}\label{eq:joint-k-reduced}
	\begin{align}
	\min\limits_{\vz, \vr, t, \vx} \quad & \vc^\top \vx\\
	\text{s.t.}\quad & (\vz,\vr,t,\vx)\ \text{satisfies} \ \eqref{joint:vars}\text{--}\eqref{joint:bigM1}\ \text{and} \ \eqref{joint-k:knapsack},\label{joint-k:basic-reduced}\\
	& \frac{\vb^\top_p {\vxi}_i + d_p -\va^\top_p \vx}{\|\vb_p \|_*}+ \frac{-\vb^\top_p {\vxi}_i - q_p}{\|\vb_p \|_*} z_i\geq t-r_i, \quad i \in [N],\ p \in [P]\label{joint-k:bigM2-reduced}.
	\end{align}
\end{subequations}
\begin{theorem}\label{thm:improved-formulation}
Formulation	\eqref{eq:joint-k-reduced} is an exact reformulation of~\eqref{eq:dr-ccp} where the safety set is given by~\eqref{eq:joint-safety}.
\end{theorem}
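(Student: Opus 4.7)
The plan is to show that the projection of the feasible region of \eqref{eq:joint-k-reduced} onto the $\vx$-space equals $\cX_{\DR}(\cS)$ by reducing both inclusions to Theorem~\ref{thm:joint-knapsack}, so that the definition of $\cX_{\DR}(\cS)$ itself need not be manipulated directly.

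For the ``$\subseteq$'' direction, I would observe that the reduced coefficient $(-\vb_p^\top\vxi_i - q_p)/\|\vb_p\|_*$ in \eqref{joint-k:bigM2-reduced} is bounded above by the big-$M$ value prescribed in Remark~\ref{rem:bigM-joint} (if necessary, $M$ can simply be enlarged to ensure this). Since $z_i \geq 0$, \eqref{joint-k:bigM2-reduced} then implies \eqref{joint:bigM2}, and Remark~\ref{rem:bigM-joint} also makes \eqref{joint-k:bigM3} hold automatically. Hence any $(\vx,\vz,\vr,t)$ feasible for \eqref{eq:joint-k-reduced} is also feasible for \eqref{eq:joint-knapsack}, giving $\vx \in \cX_{\DR}(\cS)$ by Theorem~\ref{thm:joint-knapsack}.

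For the reverse inclusion, starting from an arbitrary $\vx \in \cX_{\DR}(\cS)$ I would recycle the triple $(\bar\vz,\vr,t)$ constructed in the proof of Theorem~\ref{thm:joint-knapsack}: take $\vr,t$ from the representation \eqref{eq:cc-distance-formulation} of $\cX_{\DR}(\cS)$, and set $\bar z_i = 1$ if and only if $s(\vx,\vxi_i) < 0$, where $s(\vx,\vxi_i) := \min_{p\in[P]} s_p(\vx,\vxi_i)$. Constraints \eqref{joint:vars}, \eqref{joint:conic}, \eqref{joint:bigM1}, and \eqref{joint-k:knapsack} are already verified there, so only \eqref{joint-k:bigM2-reduced} remains. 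For indices with $\bar z_i = 0$ the verification is immediate: $s(\vx,\vxi_i) \geq 0$ forces $s_p(\vx,\vxi_i) \geq 0$ for every $p$, and combined with \eqref{joint:bigM2} this yields $s_p(\vx,\vxi_i) \geq t - r_i$, which is exactly \eqref{joint-k:bigM2-reduced} at $z_i = 0$.

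The main obstacle is the case $\bar z_i = 1$, where \eqref{joint-k:bigM2-reduced} collapses to $(d_p - q_p - \va_p^\top\vx)/\|\vb_p\|_* \geq t - r_i$. From \eqref{joint:bigM1} with $\bar z_i = 1$ one obtains $t - r_i \leq 0$, so it suffices to show $\va_p^\top\vx \leq d_p - q_p$. I plan to establish this via a pigeonhole argument driven by \eqref{joint-k:knapsack}: by definition of $q_p$ as the $(k+1)$-st largest value among $\{-\vb_p^\top\vxi_j\}_{j \in [N]}$, there are at least $k+1$ indices $j$ satisfying $-\vb_p^\top\vxi_j \geq q_p$, and since $\sum_j \bar z_j \leq k$, at least one such index $j^*$ must have $\bar z_{j^*} = 0$. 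For that $j^*$ one simultaneously has $\vb_p^\top\vxi_{j^*} \leq -q_p$ and $s_p(\vx,\vxi_{j^*}) \geq 0$, and combining these yields $\va_p^\top\vx \leq \vb_p^\top\vxi_{j^*} + d_p \leq d_p - q_p$, completing the verification.
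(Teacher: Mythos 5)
Your proof is correct and follows essentially the same route as the paper: both reduce the claim to Theorem~\ref{thm:joint-knapsack} and hinge on the quantile inequality $\frac{-q_p+d_p-\va_p^\top\vx}{\|\vb_p\|_*}\ge 0$ (the paper's \eqref{eq:mixing-base-quantile}), which you re-derive inline via the same pigeonhole argument the paper gives in Section~\ref{sec:mixing}, and both handle the $z_i=1$ case by comparing against $0\ge t-r_i$ from \eqref{joint:bigM1}. One cosmetic slip: in the forward direction, \eqref{joint-k:bigM3} does not ``hold automatically'' for the given $\vz$ when $z_i=0$, but this is harmless because satisfying \eqref{joint:vars}--\eqref{joint:bigM2} already places $\vx$ in $\cX_{\DR}(\cS)$ by \eqref{eq:joint-exact}.
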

\begin{proof}
	By Theorem~\ref{thm:joint-knapsack},~\eqref{eq:joint} with~\eqref{joint-k:knapsack} is an exact reformulation. Hence, we need to argue that reducing the big-$M$ value in~\eqref{joint:bigM2} to obtain~\eqref{joint-k:bigM2-reduced} keeps the formulation valid. To this end, it suffices to argue that constraints~\eqref{joint:bigM1} and~\eqref{joint-k:bigM2-reduced} correctly represent $\dist(\vxi_i,\cS(\vx)) \geq t - r_i$ for $i\in[N]$. 
	Let $i\in[N]$. If $\min\limits_{p \in [P]} \left\{\frac{\vb_p^\top \vxi_i + d_p - \va_p^\top \vx}{\|\vb_p\|_*}\right\}\geq0$, we can set $z_i=0$ and we obtain $\dist(\vxi_i,\cS(\vx))=\min\limits_{p \in [P]} \left\{\frac{\vb_p^\top \vxi_i + d_p - \va_p^\top \vx}{\|\vb_p\|_*}\right\}$. In this case,~\eqref{joint:bigM1} becomes redundant and~\eqref{joint-k:bigM2-reduced} represents $\dist(\vxi_i,\cS(\vx)) \geq t - r_i$. On the other hand, if we set $z_i = 1$, then \eqref{joint:bigM1} is $0 \geq t - r_i$ and \eqref{joint-k:bigM2-reduced} becomes redundant. However, $\dist(\vxi_i,\cS(\vx)) \geq t - r_i$ is less restrictive on $(\vx,\vr,t)$ than $0 \geq t-r_i$, so at optimality, there is at least one solution such that $z_i = 0$ whenever $\min\limits_{p \in [P]} \left\{\frac{\vb_p^\top \vxi_i + d_p - \va_p^\top \vx}{\|\vb_p\|_*}\right\}\geq0$, hence $\dist(\vxi_i,\cS(\vx)) \geq t - r_i$ is correctly represented.
	
	{If $\min\limits_{p \in [P]} \left\{\frac{\vb_p^\top \vxi_i + d_p - \va_p^\top \vx}{\|\vb_p\|_*}\right\}<0$, then $\dist(\vxi_i,\cS(\vx))=0$ and we must have $z_i=1$ by~\eqref{eq:mixing-base-rhs}.
	Since $z_i=1$,~\eqref{joint-k:bigM2-reduced} and \eqref{joint:bigM1} respectively become
	\begin{align*}
	t - r_i &\leq \frac{\vb_p^\top \vxi_i + d_p - \va_p^\top \vx}{\|\vb_p\|_*}+ \frac{-\vb^\top_p {\vxi}_i -q_p}{\|\vb_p \|_*}=\frac{-q_p + d_p - \va_p^\top \vx}{\|\vb_p\|_*}\\
	t - r_i &\leq 0.
	\end{align*}
	By \eqref{eq:mixing-base-quantile} we know that $\frac{-q_p + d_p - \va_p^\top \vx}{\|\vb_p\|_*} \geq 0$ so the first inequality is weaker than the second, hence we have $\dist(\vxi_i,\cS(\vx))=0 \geq t-r_i$ as required.
	}
	\ifx\flagJournal\true \qed \fi
\end{proof}

In contrast to \eqref{joint:bigM2}, the inequality \eqref{joint-k:bigM2-reduced} is \emph{big-$M$-free}. During our computational study summarized in Section \ref{sec:experiments}, we observed that the coefficients of the variables $\vz$ in \eqref{joint-k:bigM2-reduced} are significantly smaller than the big-$M$ value computed from Remark \ref{rem:bigM-joint}.

\section{Improved formulation and valid inequalities from robust 0-1 programming}\label{sec:path-ineq}

In this section, we study, in closer detail, the set of constraints from~\eqref{joint-k:bigM2-reduced}. By appealing to the conditional value-at-risk interpretation for \eqref{eq:dr-ccp}, we prove a bound on $t$. Consequently, we uncover another hidden structure related to robust 0-1 programming with a budget uncertainty set. This allows us to derive a new class of valid inequalities.

For a given $p\in [P]$, recall the definition of $q_p$ being the $(k+1)$-th largest value amongst $\left\{-\vb_p^\top \vxi_i\right\}_{i \in [N]}$ where $k:= \lfloor \epsilon N \rfloor$. We also define $h_{i,p} := \frac{-\vb_p^\top \vxi_i -q_p}{\|\vb_p\|_*}$ and $u_p := \frac{-q_p+d_p-\va_p^\top \vx}{\|\vb_p\|_*} - t$. The constraints~\eqref{joint-k:bigM2-reduced} can be cast as the mixed-integer set
\[R_p :=\left\{ (u_p,\vr,\vz) \in \bbR \times \bbR_+^N \times \{0,1\}^N: \ 
u_p + r_i \geq h_{i,p} (1-z_i),\ i \in [N]\right\}.\]
In fact, a similar mixed-integer set has been studied in the context of robust 0-1 programming by~\citet{atamturk2006robust}:
\[
R^+ := \left\{ (u,\vr,\vz)\in\bbR_+\times\bbR_+^N \times\{0,1\}^N: \  u + r_i \geq h_i (1-z_i),\ i \in [N] \right\},
\]
where $h_1,\ldots, h_N$ are assumed to all be positive. The main difference between $R_p$ and $R^+$ is that $u_p$ is unrestricted in $R_p$ but is non-negative in $R^+$, and some $h_{i,p}$ may be negative or zero in $R_p$, but all $h_i$ are positive in $R^+$. We refer to the set $R^+$ as the \emph{robust 0-1 set}, since it originates from the robust counterpart of a 0-1 program whose objective vector belongs to a ``budget uncertainty set". In \citet{atamturk2006robust}, the binary variables in $R^+$ correspond to the original decision variables of the 0-1 program.

The fact that $R_p$ is so similar to $R^+$ may seem surprising at first, yet is less so if we consider the conditional value-at-risk (CVaR) interpretation for the Wasserstein robust chance constraint. More precisely, \citet[Corollary 1]{xie2018distributionally} gives the following alternate version of \eqref{eq:cc-distance-formulation}:
\begin{subequations}
\begin{align}
\cX_{\DR}(\cS)
&= \left\{ \vx \in \cX :~ \frac{\theta}{\epsilon} + \CVaR_{1-\epsilon}\left( -\dist(\vxi,\cS(\vx));\bbP_N \right) \leq 0 \right\}\label{eq:cc-distance-cvar}\\
&= \left\{ \vx \in \cX :~ \frac{\theta}{\epsilon} + \max_{y \in B} \frac{1}{\epsilon N} \sum_{i \in [N]} (-\dist(\vxi_i,\cS(\vx))) y_i \leq 0 \right\}\label{eq:cc-distance-cvar-dual}\\
\text{where }  &\CVaR_{1-\epsilon}(v(\vxi);\bbP_N) := \min_t \left\{t + \frac{1}{\epsilon N} \sum_{i \in [N]} \max\{0,v(\vxi_i) - t\} \right\},\label{eq:cvar-def}\\
&B := \left\{ \vy :~ \frac{1}{N} \sum_{i \in [N]} y_i = \epsilon, \ \bm{0} \leq \vy \leq \bm{1} \right\}.\label{eq:budget-uncertainty}
\end{align}
\end{subequations}
Note that \eqref{eq:cc-distance-cvar-dual} comes from the dual interpretation of the CVaR, see e.g., \citep[(3)]{Noyan2018tutorial}. In particular, $B$ has exactly the same structure as the budget uncertainty set studied in \citet{atamturk2006robust}.

We now further exploit the CVaR interpretation \eqref{eq:cc-distance-cvar} of \eqref{eq:cc-distance-formulation} so that we can cast $R_p$ \emph{exactly} in the same form as $R^+$. Specifically, we show that $u_p \geq 0$ is a valid inequality for \eqref{eq:joint-k-reduced}, which will then allow us to eliminate constraints with negative $h_{i,p}$. Consequently, this allows us to directly apply the valid inequalities derived for $R^+$ from \citet{atamturk2006robust} to $R_p$. We first provide a bound on the value of the $t$-variable in \eqref{eq:cc-distance-formulation} (which then translates to a bound on the $t$-variables for all subsequent formulations).
\begin{lemma}\label{lemma:t-bound}
Let $k:=\lfloor \epsilon N \rfloor$ and fix any $\vx \in \cX_{\DR}(\cS)$. Then there exists $(\vr,t)$ such that $t$ is equal to the $(k+1)$-th smallest value amongst $\{\dist(\vxi_i,\cS(\vx))\}_{i \in [N]}$ and the constraints of \eqref{eq:cc-distance-formulation} are satisfied.
\end{lemma}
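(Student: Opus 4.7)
The plan is to exhibit an explicit $(\vr, t)$ with $t$ equal to the $(k+1)$-th smallest distance and verify each constraint of \eqref{eq:cc-distance-formulation} in turn. Writing $d_i := \dist(\vxi_i, \cS(\vx))$ and $d_{(1)} \leq \cdots \leq d_{(N)}$ for the corresponding order statistics, I take $t := d_{(k+1)}$ and set $r_i := \max\{0, t - d_i\}$. Then $t \geq 0$ (distances are nonnegative), $\vr \geq \bm 0$, and $r_i \geq t - d_i$ all hold by construction, so the only nontrivial thing left to check is the conic inequality $\epsilon t \geq \theta + \frac{1}{N} \sum_i r_i$.

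\textbf{Key reduction.} For any fixed $t$, the assignment $r_i = \max\{0, t - d_i\}$ is the pointwise smallest value of $r_i$ consistent with $r_i \geq 0$ and $r_i \geq t - d_i$, so the conic inequality holds for some $\vr$ if and only if it holds for this specific one. Defining the concave piecewise linear function
\begin{equation*}
g(\tau) := \epsilon \tau - \frac{1}{N}\sum_{i \in [N]} \max\{0, \tau - d_i\},
\end{equation*}
the remaining conic constraint for my candidate is equivalent to $g(d_{(k+1)}) \geq \theta$. The hypothesis $\vx \in \cX_{\DR}(\cS)$ together with \eqref{eq:cc-distance-formulation} furnishes some $t^* \geq 0$ with $g(t^*) \geq \theta$ (after applying the same shrinking to the corresponding $\vr^*$), so the entire lemma reduces to showing that $d_{(k+1)}$ is a maximizer of $g$ on $\bbR$.

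\textbf{Finish and main obstacle.} On each interval $[d_{(j)}, d_{(j+1)}]$ the slope of $g$ equals $\epsilon - j/N$, which, since $k = \lfloor \epsilon N \rfloor$, is nonnegative precisely when $j \leq k$ and strictly negative when $j \geq k+1$. Hence $g$ is nondecreasing up to $d_{(k+1)}$ and strictly decreasing thereafter, which gives $g(d_{(k+1)}) \geq g(t^*) \geq \theta$ and closes the argument. The main subtlety is this last maximization step, and in particular the edge case $\epsilon N \in \bbZ$: there $g$ is constant on $[d_{(k)}, d_{(k+1)}]$, but $d_{(k+1)}$ remains a maximizer, so the conclusion is unchanged. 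A clean alternative path, which avoids the slope bookkeeping entirely, is to invoke the CVaR representation \eqref{eq:cc-distance-cvar}--\eqref{eq:cvar-def} and use the standard fact that the infimum in the CVaR definition is attained at the Value-at-Risk of $-\dist(\vxi,\cS(\vx))$ under $\bbP_N$, which is exactly $-d_{(k+1)}$; substituting that optimal $t$ back into \eqref{eq:cc-distance-formulation} yields the same $(\vr, t)$.
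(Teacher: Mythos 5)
Your proof is correct, and your candidate $(\vr,t)$ — namely $t=d_{(k+1)}$ and $r_i=\max\{0,t-d_i\}$ — is exactly the one the paper uses. The difference lies in how the conic inequality $\epsilon t\geq\theta+\frac{1}{N}\sum_i r_i$ is certified. The paper routes through the CVaR characterization \eqref{eq:cc-distance-cvar} of $\cX_{\DR}(\cS)$ and verifies that $(t',\vr)$ with $t'=-d_{(k+1)}$ is optimal for the CVaR minimization \eqref{eq:cvar-lp} by exhibiting a matching dual solution $\vy\in B$ from the max representation. You instead work directly from \eqref{eq:cc-distance-formulation}: you extract a feasible $t^*$ with $g(t^*)\geq\theta$, and then show $d_{(k+1)}$ globally maximizes the concave piecewise-linear $g$ via the slope count $\epsilon-j/N$ and the definition $k=\lfloor\epsilon N\rfloor$ (correctly noting the degenerate plateau when $\epsilon N\in\bbZ$). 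The two arguments certify the same optimality fact — that the infimum in \eqref{eq:cvar-def} is attained at the $(k+1)$-th order statistic — but yours is self-contained and avoids invoking \eqref{eq:cc-distance-cvar} and the dual set $B$ altogether, while the paper's primal–dual pair is reused later (e.g., the explicit $\vy$ and the CVaR viewpoint feed into Proposition \ref{prop:u_p-bound} and the connection to robust 0-1 programming), which is presumably why the authors set it up that way. Your closing remark that one could alternatively quote the VaR-attainment fact is precisely the paper's route.
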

\begin{proof}

{
Recall that the CVaR of a random variable $v(\vxi)$ from \eqref{eq:cvar-def}, where $\vxi \sim \bbP_N$, has two equivalent primal and dual optimization representations:
\begin{subequations}\label{eq:cvar-primal-dual}
\begin{align}
\CVaR_{1-\epsilon}(v(\vxi);\bbP_N) &= \max_{y \in B} \frac{1}{\epsilon N} \sum_{i \in [N]} v(\vxi_i) y_i\\
&= \min_{\vr,t'} \left\{t' + \frac{1}{\epsilon N} \sum_{i \in [N]} r_i : r_i \geq v(\vxi_i) - t', i \in [N], \vr \geq \mathbf 0 \right\}, \label{eq:cvar-lp}
\end{align}
\end{subequations}
where $B$ is defined in \eqref{eq:budget-uncertainty}. Without loss of generality, assume that we have an ordering $v(\vxi_1) \geq \dots \geq v(\vxi_N)$. It is easy to check that a primal-dual optimal pair for \eqref{eq:cvar-primal-dual} is given by 
\begin{align*}
y_i &= \begin{cases}
1, &i=1,\ldots,k\\
\epsilon N - k, &i=k+1\\
0, &i > k+1
\end{cases}\\
t' &= v(\vxi_{k+1})\\
r_i &= \max\left\{ 0, v(\vxi_i) - t'\right\},
\end{align*}
as $\vy \in B$ and $\vr \geq \bm{0}$, and
\[ \frac{1}{\epsilon N} \sum_{i \in [N]} v(\vxi_i) y_i = t' + \frac{1}{\epsilon N} \sum_{i \in [N]} r_i. \]
}

Now take $v(\vxi_i) = -\dist(\vxi_i,\cS(\vx))$ for each $i \in [N]$, and recall that we assume $v(\vxi_1)=-\dist(\vxi_1,\cS(\vx)) \geq \cdots \geq v(\vxi_N)=-\dist(\vxi_N,\cS(\vx))$. {Let $(\vr,t')$ be the optimal solution to the CVaR formulation \eqref{eq:cvar-lp} specified above with $t' = v(\vxi_{k+1}) = -\dist(\vxi_{k+1},\cS(\vx))$, $r_i=\max\{0, v(\vxi_i) - t'\}$ for all $i \in [N]$.} Since $\vx \in \cX_{\DR}(\cS)$, we know from \eqref{eq:cc-distance-cvar} that
\[ \frac{\theta}{\epsilon} + \CVaR_{1-\epsilon}\left( -\dist(\vxi,\cS(\vx));\bbP_N \right) \leq 0 \implies \theta + \frac{1}{N} \sum_{i \in [N]} r_i \leq -\epsilon t'. \]
Now take $t = -t' = \dist(\vxi_{k+1},\cS(\vx)) \geq 0$, and notice that {$r_i = \max\left\{ 0, v(\vxi_i) - t'\right\} = \max\left\{ 0, t - \dist(\vxi_i,\cS(\vx))\right\}$ for all $i \in [N]$}, so the constraints in \eqref{eq:cc-distance-formulation} are satisfied.
\ifx\flagJournal\true \qed \fi
\end{proof}

The main result for this section is to show that $u_p \geq 0$ is valid for \eqref{eq:joint-k-reduced} for each $p \in [P]$.
\begin{proposition}\label{prop:u_p-bound}
Suppose that $\theta > 0$. Consider an arbitrary $\vx \in \cX_{\DR}(\cS)$. There exists $(\vr,t,\vz)$ such that $(\vx,\vr,t,\vz)$ satisfies \eqref{joint-k:basic-reduced}--\eqref{joint-k:bigM2-reduced} and that for every $p \in [P]$,
\[ u_p= \frac{-q_p+d_p-\va_p^\top \vx}{\|\vb_p\|_*} - t \geq 0. \]
\end{proposition}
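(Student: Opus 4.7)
The plan is to construct an explicit feasible solution $(\vr,t,\vz)$ for which $u_p \geq 0$ holds for every $p \in [P]$, and the key tool will be a pigeonhole argument that matches the quantile driving $t$ with the quantile $q_p$.

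First I would invoke Lemma~\ref{lemma:t-bound}: take $t$ equal to the $(k+1)$-th smallest value of $\{\dist(\vxi_i,\cS(\vx))\}_{i\in[N]}$, together with $r_i = \max\{0,\, t - \dist(\vxi_i,\cS(\vx))\}$, so that \eqref{joint:conic} is automatically satisfied. Because $\theta > 0$, \eqref{joint:conic} combined with $\vr \geq \bm{0}$ forces $t > 0$. Next I would define $\vz$ by setting $z_i := 1$ if and only if $\dist(\vxi_i,\cS(\vx)) = 0$. Since $t > 0$ is itself the $(k+1)$-th smallest distance, at most $k = \lfloor \epsilon N \rfloor$ indices have zero distance, so the knapsack inequality \eqref{joint-k:knapsack} holds.

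The core step is showing $u_p \geq 0$ for each fixed $p$. I would consider the two sets
\[ A := \{i \in [N]:\, \dist(\vxi_i,\cS(\vx)) \geq t\}, \qquad B_p := \{i \in [N]:\, -\vb_p^\top \vxi_i \geq q_p\}. \]
By the choice of $t$, $|A| \geq N-k$, and by the definition of $q_p$ as the $(k+1)$-th largest value of $\{-\vb_p^\top \vxi_i\}_{i\in[N]}$, $|B_p| \geq k+1$. Hence $|A| + |B_p| > N$, so there exists $i^* \in A \cap B_p$. Since $\dist(\vxi_{i^*},\cS(\vx)) \geq t > 0$, the distance formula in \eqref{eq:joint-safety} yields $\frac{\vb_p^\top \vxi_{i^*} + d_p - \va_p^\top \vx}{\|\vb_p\|_*} \geq t$, i.e.\ $-\vb_p^\top \vxi_{i^*} \leq d_p - \va_p^\top \vx - t\|\vb_p\|_*$. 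Combining this with $-\vb_p^\top \vxi_{i^*} \geq q_p$ produces $q_p \leq d_p - \va_p^\top \vx - t\|\vb_p\|_*$, which is exactly $u_p \geq 0$.

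Finally I would verify the remaining constraints \eqref{joint:bigM1} and \eqref{joint-k:bigM2-reduced}. For indices with $z_i = 0$, \eqref{joint-k:bigM2-reduced} reduces to $\frac{\vb_p^\top \vxi_i + d_p - \va_p^\top \vx}{\|\vb_p\|_*} \geq t - r_i$, which holds because $\dist(\vxi_i,\cS(\vx)) > 0$ implies $\frac{\vb_p^\top \vxi_i + d_p - \va_p^\top \vx}{\|\vb_p\|_*} \geq \dist(\vxi_i,\cS(\vx)) \geq t - r_i$ by the definition of $r_i$. For indices with $z_i = 1$, the corresponding $r_i$ equals $t$, so \eqref{joint:bigM1} is $0 \geq 0$ and \eqref{joint-k:bigM2-reduced} becomes $u_p + t \geq 0$, which is immediate from $u_p \geq 0$ and $t > 0$. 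The main obstacle in the argument is the pigeonhole step: the nontrivial observation is that the distance quantile governing $t$ and the coordinate-wise quantile $q_p$ align at position $k+1$ in precisely the way needed to force a common index $i^*$, and it is this coincidence that certifies $u_p \geq 0$.
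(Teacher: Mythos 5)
Your proof is correct and follows essentially the same route as the paper's: both invoke Lemma~\ref{lemma:t-bound} to set $t$ equal to the $(k+1)$-th smallest value of $\{\dist(\vxi_i,\cS(\vx))\}_{i\in[N]}$, take $r_i=\max\{0,t-\dist(\vxi_i,\cS(\vx))\}$, set $z_i=1$ exactly on the zero-distance scenarios, and then reduce the claim $u_p\geq 0$ to a comparison of the two $(k+1)$-th order statistics. The only cosmetic difference is in that last step: you produce a single witness index $i^*$ via a pigeonhole intersection of the sets $A$ and $B_p$, whereas the paper counts directly that at least $N-k$ of the values $\frac{\vb_p^\top\vxi_i+d_p-\va_p^\top\vx}{\|\vb_p\|_*}$ are at least $t$ and concludes the $(k+1)$-th smallest among them is too --- the same counting argument in different clothing.
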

\begin{proof}
For convenience,  for each $p \in [P]$, denote $g_{i,p}(\vx) := \frac{\vb_p^\top \vxi_i+d_p-\va_p^\top \vx}{\|\vb_p\|_*}$ and $g_p^*(\vx) := \frac{-q_p+d_p-\va_p^\top \vx}{\|\vb_p\|_*}$. With these definitions, the distance function from \eqref{eq:joint-safety} is
\[ \dist(\vxi_i,\cS(\vx)) = \max\left\{ 0, \min_{p \in [P]} g_{i,p}(\vx) \right\}. \]
Without loss of generality, assume that $\dist(\vxi_1,\cS(\vx)) \leq \cdots \leq \dist(\vxi_N,\cS(\vx))$, and denote $d^*(\vx) := \dist(\vxi_{k+1},\cS(\vx))$ to be the $(k+1)$-smallest distance value.

By Lemma \ref{lemma:t-bound}, take $(\vr,t)$ to be a solution that satisfies $t = d^*(\vx)$, {$r_i = \max\left\{ 0, t - \dist(\vxi_i,\cS(\vx)) \right\}$}, and set $z_i = 1$ when $\min_{p \in [P]} g_{i,p}(\vx) < 0$, otherwise $z_i = 0$ for each $i \in [N]$.
{It is straightforward to check that $(\vx,\vr,t,\vz)$ satisfies \eqref{joint-k:basic-reduced}, so we focus on \eqref{joint-k:bigM2-reduced}. If $z_i = 0$, then since $\dist(\vxi_i,\cS(\vx)) \geq t - r_i$ and $\dist(\vxi_i,\cS(\vx))=\min_{p \in [P]} g_{i,p}(\vx)$, \eqref{joint-k:bigM2-reduced} holds. If $z_i=1$, then $\dist(\vxi_i,\cS(\vx))=0$, and $r_i=t$, so $t-r_i=0$, hence \eqref{joint-k:bigM2-reduced} reduces to \eqref{eq:mixing-base-quantile}.}

It remains to show that for every $p \in [P]$, $0\le u_p=g_p^*(x)-t=g_p^*(x)-d^*(x)$, i.e., $d^*(\vx) \leq g_p^*(\vx)$ holds. Note that by definition of $-q_p$, $g_p^*$ is the $(k+1)$-th smallest value amongst $\{g_{i,p}(\vx)\}_{i \in [N]}$. Focusing on the definition of $d^*(\vx) := \dist(\vxi_{k+1},\cS(\vx))$, we consider two cases.

If $\min_{p \in [P]} g_{k+1,p}(\vx) \leq 0$, then $d^*(\vx) = t = 0$. But, then we cannot have $\epsilon t \geq \theta + \frac{1}{N} \sum_{i \in [N]} r_i$, since $\theta > 0$ and $\vr\ge\mathbf 0$. Thus, we cannot have $\min_{p \in [P]} g_{k+1,p}(\vx) \leq 0$.

Now consider $\min_{p \in [P]} g_{k+1,p}(\vx) > 0$. Then, for any $i \geq k+1$ and $p \in [P]$, we have 
\begin{align*}
0 <  \min_{p \in [P]} g_{k+1,p}(\vx) =d^*(\vx) \le \dist(\vxi_i,\cS(\vx)) = \min_{p' \in [P]} g_{i,p'}(\vx) \leq g_{i,p}(\vx),
\end{align*}
where the second inequality follows from the fact that $i\ge k+1$ and the assumption that $\dist(\vxi_1,\cS(\vx)) \leq \cdots \leq \dist(\vxi_N,\cS(\vx))$, and the last equation follows from the fact that $0 < \dist(\vxi_i,\cS(\vx))$.
Based on this relation, then there are at least $N-k$ indices $i$ such that $g_{i,p}(\vx) \geq d^*(\vx)$, and thus we must have $g_p^*(\vx) \geq d^*(\vx)$. This completes the proof.
\ifx\flagJournal\true \qed \fi
\end{proof}

We can now explicitly impose the constraint $u_p \geq 0$ into the formulation for \eqref{eq:dr-ccp}. When we do this, the constraint $u_p + r_i \geq h_{i,p} (1-z_i)$ corresponding to any indices $i,p$ such that $h_{i,p} \leq 0$ becomes redundant since the left-hand side of this constraint is non-negative, and its right-hand side is non-positive. Based on this, in our improved formulation, we define the following index sets:
\begin{align*}
[N]_p := \left\{ i \in [N] : -\vb_p^\top \vxi_i > q_p \right\}, \quad p \in [P].
\end{align*}
Then, $h_{i,p} > 0$ if and only if $i \in [N]_p$. Our proposed formulation is as follows: 
\begin{subequations}\label{eq:joint-k-compact}
	\begin{align}
	\min\limits_{\vz, \vr, t, \vx} \quad & \vc^\top \vx\\
	\text{s.t.}\quad & (\vz,\vr,t,\vx)\ \text{satisfies} \ \eqref{joint:vars}\text{--}\eqref{joint:bigM1}\ \text{and} \ \eqref{joint-k:knapsack},\label{joint-k:basic-compact}\\
	&\frac{\vb^\top_p {\vxi}_i + d_p -\va^\top_p \vx}{\|\vb_p \|_*}+ \frac{-\vb^\top_p {\vxi}_i - q_p}{\|\vb_p \|_*} z_i\geq t-r_i, \quad i \in [N]_p,\ p \in [P],\label{joint-k:bigM2-compact}\\
	&\frac{-q_p + d_p -\va^\top_p \vx}{\|\vb_p \|_*} \geq t, \quad p \in [P].\label{joint-k:nonnegative-compact}
	\end{align}
\end{subequations}
\begin{theorem}\label{thm:improved-formulation-compact}
	Formulation	\eqref{eq:joint-k-compact} is an exact reformulation of~\eqref{eq:dr-ccp} where the safety set is given by~\eqref{eq:joint-safety}.
\end{theorem}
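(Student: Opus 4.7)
The plan is to verify both directions of the set equality $\cX_{\DR}(\cS) = \{\vx \in \cX : \text{\eqref{joint-k:basic-compact}--\eqref{joint-k:nonnegative-compact}}\}$, leveraging Theorem \ref{thm:improved-formulation} (which establishes exactness of \eqref{eq:joint-k-reduced}) together with Proposition \ref{prop:u_p-bound} (which proves that $u_p \geq 0$ is valid). The key observation is that \eqref{eq:joint-k-compact} differs from \eqref{eq:joint-k-reduced} in two symmetric ways: it drops the constraints \eqref{joint-k:bigM2-reduced} for indices $i \notin [N]_p$ (where $h_{i,p} \leq 0$), and it explicitly adds the constraint \eqref{joint-k:nonnegative-compact}, which is precisely $u_p \geq 0$. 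So I only need to check that dropping these constraints is compensated by \eqref{joint-k:nonnegative-compact}.

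For the forward inclusion, I would take any $\vx \in \cX_{\DR}(\cS)$ and invoke Proposition \ref{prop:u_p-bound} directly to obtain $(\vz,\vr,t)$ satisfying the full set of constraints \eqref{joint-k:basic-reduced}--\eqref{joint-k:bigM2-reduced} \emph{together with} $u_p \geq 0$ for every $p \in [P]$. The constraint $u_p \geq 0$ is exactly \eqref{joint-k:nonnegative-compact}, and since \eqref{joint-k:bigM2-reduced} is imposed for all $i \in [N]$, it in particular holds for $i \in [N]_p$, giving \eqref{joint-k:bigM2-compact}. Hence $(\vx,\vz,\vr,t)$ is feasible for \eqref{eq:joint-k-compact}.

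For the reverse inclusion, I would take $(\vx,\vz,\vr,t)$ feasible for \eqref{eq:joint-k-compact} and show it is feasible for \eqref{eq:joint-k-reduced}, after which Theorem \ref{thm:improved-formulation} yields $\vx \in \cX_{\DR}(\cS)$. All constraints of \eqref{eq:joint-k-reduced} are immediately inherited except \eqref{joint-k:bigM2-reduced} for indices $i \notin [N]_p$. For such indices $h_{i,p} = \frac{-\vb_p^\top \vxi_i - q_p}{\|\vb_p\|_*} \leq 0$, and since $z_i \leq 1$, we have $h_{i,p} z_i \geq h_{i,p}$. Therefore
\[
\frac{\vb_p^\top \vxi_i + d_p - \va_p^\top \vx}{\|\vb_p\|_*} + \frac{-\vb_p^\top \vxi_i - q_p}{\|\vb_p\|_*} z_i \;\geq\; \frac{-q_p + d_p - \va_p^\top \vx}{\|\vb_p\|_*} \;\geq\; t \;\geq\; t - r_i,
\]
where the second inequality uses \eqref{joint-k:nonnegative-compact} and the third uses $r_i \geq 0$. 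Thus \eqref{joint-k:bigM2-reduced} holds for these indices as well, closing the argument.

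There is no substantive obstacle here; the main subtlety is simply verifying that the dropped constraints of type \eqref{joint-k:bigM2-reduced} with $h_{i,p} \leq 0$ are implied (and in fact dominated) by the single inequality \eqref{joint-k:nonnegative-compact}, using the elementary bound $h_{i,p} z_i \geq h_{i,p}$ when $z_i \in [0,1]$ and $h_{i,p} \leq 0$. The heavy lifting has been done in Theorem \ref{thm:improved-formulation} and Proposition \ref{prop:u_p-bound}, so this theorem follows from them by routine bookkeeping.
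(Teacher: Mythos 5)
Your proposal is correct and follows essentially the same route as the paper: the paper's own (one-line) proof appeals to the preceding discussion, which is exactly your argument — Proposition \ref{prop:u_p-bound} supplies a feasible point with $u_p \geq 0$ for the forward inclusion, and for the reverse inclusion the dropped constraints \eqref{joint-k:bigM2-reduced} with $h_{i,p} \leq 0$ are redundant because $u_p + r_i \geq 0 \geq h_{i,p}(1-z_i)$, which is precisely your chain of inequalities after substitution. You have merely written out explicitly the bookkeeping the paper leaves implicit.
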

\begin{proof}
The correctness of \eqref{eq:joint-k-compact} follows from the above discussion.
\ifx\flagJournal\true \qed \fi
\end{proof}

\begin{remark}
The size of each index set $[N]_p$ is at most $k = \lfloor \epsilon N \rfloor$, so \eqref{eq:joint-k-compact} reduces the number of constraints of \eqref{eq:joint-k-reduced} by at least $((1-\epsilon) N - 1) P$. This is particularly significant when $\epsilon$ is small (e.g., $0.1$) and $N,P$ are large.
\ifx\flagJournal\true \epr \fi
\end{remark}
Note that  similar bounding and scenario elimination strategies using the VaR interpretation are shown to be effective in multivariate CVaR-constrained optimization and risk-averse Markov decision processes \citep{kuccukyavuz2016cut,Liu2017,Noyan2019,Merakli2019}.

Importantly, via the constraints \eqref{joint-k:bigM2-compact} and \eqref{joint-k:nonnegative-compact}, we can re-define the mixed-integer set $R_p$ to have the exact same structure as $R^+$:
\[R_p :=\left\{ (u_p,\vr,\vz)\in\bbR_+ \times\bbR_+^N \times\{0,1\}^N:\ u_p + r_i \geq h_{i,p} (1-z_i),\ i \in [N]_p\right\},\]
where it is understood that $u_p = \frac{-q_p+d_p-\va_p^\top \vx}{\|\vb_p\|_*} - t$. \citet[Theorem 1]{atamturk2006robust} proposes a class of valid inequalities for $R^+$, which we adapt to obtain valid inequalities for $R_p$. To do this, we let $N_p$ be the size of each $[N]_p$ and define an ordering $[N]_p = \left\{ (j,p) \in \bbN : j \in [N_p] \right\}$ as follows: 
\[ h_{(N_p,p),p} \geq \cdots \geq h_{(1,p),p}. \]

\begin{proposition}
	For any $p \in [P]$ and $J=\{j_1,\ldots, j_m\}$ satisfying $m \geq 1$, $N_p \geq j_1\geq\cdots\geq j_m\geq 1$, the following inequality is valid for \eqref{eq:joint-k-compact}:
	\begin{equation}\label{eq:path}
	\frac{-q_p+d_p-\va_p^\top \vx}{\|\vb_p\|_*} - t + \sum_{i\in[m]}r_{(j_i,p)} \geq \sum_{i\in[m]} (h_{(j_i,p),p}-h_{(j_{i+1},p),p})(1-z_{(j_i,p)})
	\end{equation}
	where $h_{(j_{m+1},p),p}:=0$.
\end{proposition}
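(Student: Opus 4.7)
The plan is to observe that the paper has, by design, massaged the constraint set so that the projection of the feasible region of~\eqref{eq:joint-k-compact} onto the $(u_p,\vr_{[N]_p},\vz_{[N]_p})$-coordinates is contained in a set of exactly the form $R^+$ studied by~\citet{atamturk2006robust}. Indeed, Proposition~\ref{prop:u_p-bound} shows that $u_p \geq 0$ is a valid inequality, while constraints~\eqref{joint-k:bigM2-compact} for $i \in [N]_p$ yield $u_p + r_i \geq h_{i,p}(1-z_i)$ with $h_{i,p} > 0$ (the restriction $i\in[N]_p$ is precisely the condition $h_{i,p} > 0$). Hence any valid inequality for $R^+$ immediately transfers to~\eqref{eq:joint-k-compact}, and the claimed inequality~\eqref{eq:path} is exactly the ``path inequality'' from~\citet[Theorem~1]{atamturk2006robust}.

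For completeness, I would also sketch the direct argument to make the paper self-contained. Fix $(\vx,\vz,\vr,t)$ feasible for~\eqref{eq:joint-k-compact} and let $I_0 := \{i \in [m] : z_{(j_i,p)} = 0\}$. If $I_0 = \emptyset$, then every summand on the right-hand side of~\eqref{eq:path} vanishes, while the left-hand side is non-negative by Proposition~\ref{prop:u_p-bound} together with $\vr \geq \mathbf{0}$, so~\eqref{eq:path} holds trivially. Otherwise, order $I_0 = \{i_1 < i_2 < \cdots < i_\ell\}$. Since $z_{(j_{i_1},p)} = 0$, the inequality~\eqref{joint-k:bigM2-compact} for the scenario index $(j_{i_1},p)$ gives $u_p + r_{(j_{i_1},p)} \geq h_{(j_{i_1},p),p}$.

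The main computation is then to bound the right-hand side of~\eqref{eq:path} by $h_{(j_{i_1},p),p}$ via a telescoping argument. Because the ordering on $[N]_p$ was chosen so that $h_{(j,p),p}$ is non-decreasing in $j$ and because $j_1 \geq j_2 \geq \cdots \geq j_m$, the sequence $h_{(j_i,p),p}$ is non-increasing in $i$; in particular, for every $k < \ell$ we have $h_{(j_{i_k+1},p),p} \geq h_{(j_{i_{k+1}},p),p}$ (and all $h$'s are non-negative with $h_{(j_{m+1},p),p}:=0$). Writing out
\[
\sum_{i\in[m]} (h_{(j_i,p),p}-h_{(j_{i+1},p),p})(1-z_{(j_i,p)}) = \sum_{k=1}^{\ell} \left(h_{(j_{i_k},p),p} - h_{(j_{i_k+1},p),p}\right),
\]
and telescoping using the above inequalities shows that this sum is at most $h_{(j_{i_1},p),p}$. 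Combining with $u_p + r_{(j_{i_1},p)} \geq h_{(j_{i_1},p),p}$ and $\vr \geq \mathbf{0}$ yields~\eqref{eq:path}.

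I do not expect any real obstacle: all the structural work has been done in Proposition~\ref{prop:u_p-bound} and in the definition of $[N]_p$, which together enforce the sign conditions that make $R_p$ identical in form to $R^+$. The only point that requires a little care is checking that the restriction to indices in $[N]_p$ (rather than all of $[N]$) does not weaken the argument; this is automatic because dropping constraints with $h_{i,p} \leq 0$ leaves the $R^+$-substructure intact, and the path inequality~\eqref{eq:path} only involves indices $(j_i,p) \in [N]_p$.
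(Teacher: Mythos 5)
Your proposal is correct and takes essentially the same route as the paper, which simply observes that constraints \eqref{joint-k:bigM2-compact} and \eqref{joint-k:nonnegative-compact} place $R_p$ in exactly the form of the robust 0-1 set $R^+$ and then cites \citet[Theorem 1]{atamturk2006robust}. Your additional telescoping verification (bounding the right-hand side of \eqref{eq:path} by $h_{(j_{i_1},p),p}$ using the monotonicity of the $h$-values and the single constraint for the largest index with $z=0$) is also correct and would make the paper more self-contained, but it is not needed beyond the citation.
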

\begin{proof}
	This follows immediately from \citet[Theorem 1]{atamturk2006robust}.
	\ifx\flagJournal\true \qed \fi
\end{proof}
We refer to the inequalities~\eqref{eq:path} as the \emph{path inequalities}.

\begin{remark}\label{rem:sep-pos-path}
\citet{atamturk2006robust} gives an $O(N_p^2) = O(\lfloor\epsilon N\rfloor^2)$-time separation algorithm for~\eqref{eq:path}, which is based on finding a shortest path in an acyclic graph. \citet[Theorems 1 and 2]{atamturk2006robust} also proves that inequalities~\eqref{eq:path} are sufficient to describe the convex hull of $R^+$ and are facet-defining.
	\ifx\flagJournal\true \epr \fi
\end{remark}

\section{Computational Study}\label{sec:experiments}

In this section, we assess the numerical performance of our improved formulation \eqref{eq:joint-k-compact} of \eqref{eq:dr-ccp} and valid inequalities from Sections~\ref{sec:nominal} and~\ref{sec:path-ineq}. 

All experiments are conducted on an Intel Core i5 3GHz processor with 6 cores and 32GB memory.  Each experiment was in single-core mode, and five experiments were run in parallel. For each model, we set the CPLEX time limit to be 3600 seconds.

CPLEX 12.9 is used as the MIP solver. Valid inequalities from Sections~\ref{sec:nominal} and~\ref{sec:path-ineq} are separated and added via the CPLEX user-cut callback feature.  Since using a user-cut callback function is known to affect various internal  CPLEX dynamics (such as dynamic search, aggressiveness of CPLEX presolve and cut generation procedures, etc.), in order to do a fair comparison, we include an empty user-cut callback function (that does not separate any user cuts) whenever we test a formulation which does not employ cuts, e.g., basic formulation from the literature, i.e., from~\citet{chen2018data}. {We have also conducted tests under the default CPLEX settings without the empty user cut callback to confirm that our overall conclusions do not change under the default settings.}
Our preliminary tests indicated that separating a large number of inequalities throughout the branch-and-cut tree slows down the search process, so we separate our inequalities only at the root node. 

The maximum value of $\theta$ at which~\eqref{eq:dr-ccp} is feasible can be computed by solving a variant of~\eqref{eq:joint}, that is, with the same set of constraints~\eqref{joint:vars}--\eqref{joint:bigM2} but treating $\theta$ as a variable to be maximized. We first describe our test instances in Section \ref{subsec:instances} and then discuss the performance of our proposed approaches in Section~\ref{subsec:results}.

\subsection{Test instances}\label{subsec:instances}

We consider the distributionally robust chance-constrained formulation of a transportation problem from~\citet{chen2018data}. This is the problem of transshipping a single good from a set of factories $[F]$ to a set of distribution centers $[D]$ to meet their demands while minimizing the transportation cost. Each factory $f\in[F]$ has an individual production capacity $m_f$, each distribution center $d\in[D]$ faces a random demand $\xi_d$ from the end customers, and transshipping one unit of the good from factory $f$ to distribution center incurs a cost $c_{fd}$. Given $N$ samples $\left\{ \vxi_i=(\xi_{id})_{d\in[D]}: i\in[N]  \right\}$ of $\vxi$, this problem is given by 
\begin{subequations}\label{eq:transportation}
	\begin{align}
	\min\quad & \vc^\top \vx   \label{transport:obj}\\
	\text{s.t.} \quad & \bbP \left[ \sum\limits_{f\in[F]} x_{fd} \geq {\xi}_d,\quad \forall d \in [D] \right] \geq 1-\epsilon, \quad \bbP \in \mathcal{F}(\theta), \label{transport:cc-demand}\\
	& \sum\limits_{d\in[D]} x_{fd} \leq m_f, \quad f \in [F], \label{transport:capacity}\\
	& x_{fd} \geq 0, \quad  f \in [F],\ d\in [D]. \label{transport:vars}
	\end{align}
\end{subequations}
Here, \eqref{transport:cc-demand} is a joint chance constraint with right-hand side uncertainty, so~\eqref{eq:transportation} can be reformulated as in~\eqref{eq:joint}.

We use the same random instance generation scheme from~\citet{chen2018data}. We generate instances with {$F\in\{5,10\}$} factories and  
{$D\in\{50,100\}$} distribution centers whose locations are chosen uniformly at random from the Euclidean plane $[0,10]^2$. We set the transportation cost $c_{fd}$ to the Euclidean distance between factory $f\in[F]$ and distribution center $d\in[D]$. We obtain the scenarios by sampling for the demand vector $\vxi$ from a uniform distribution supported on $\left[0.8\vmu, 1.2\vmu\right]$, where the expected demand $\mu_d$ of any distribution center $d\in[D]$ is chosen uniformly at random from $[0,10]$. The capacity $m_f$ of each factory $f\in[F]$ is drawn uniformly from $[0,1]$ at first, but the capacities are scaled later so that the total capacity $\sum_{f\in[F]}m_f$ equals $\frac{3}{2}\max_{i\in[N]}\left\{\sum_{d\in[D]}\xi_{id}\right\}$.  For each instance, we test ten different values $\theta_1<\cdots<\theta_{10}$ for the Wasserstein radius. As in~\citet{chen2018data}, we set $\theta_1=0.001$.   For the other values, we compute the maximum value $\theta_{\max}$ of $\theta$ such that~\eqref{eq:joint} is feasible and set $\theta_j=\frac{j-1}{10}\theta_{\max}$ for $j=2,\ldots, 10$. 
We have empirically found that the value $\theta_{\max}$ is between 0.1 and 0.35 for our instances, so $\theta_2$ is between 0.01 and 0.035 and thus greater than $\theta_1=0.001$.
We fix the risk tolerance $\epsilon$ to be 0.1. Lastly, we select $M$ to be
\begin{equation}\label{eq:bigMvalue-transport}
M:=\max\left\{\sum_{f\in[F]}m_f-\min_{i\in[N],d\in[D]}\left\{\xi_{id}\right\},\ \max_{i\in[N],d\in[D]}\left\{\xi_{id}\right\}\right\},
\end{equation}
which is sufficiently large (see Remark~\ref{rem:bigM-joint}). For each problem parameter combination, we generate 10 random instances and report the average statistics.

As noted in \citet{chen2018data}, there is no need to specify which norm $\|\cdot\|$ to use in~\eqref{eq:joint} and~\eqref{eq:joint-knapsack} when reformulating~\eqref{eq:transportation} because the random right-hand side inside~\eqref{transport:cc-demand} contains a single random variable with coefficient 1 so that all $\|\vb_p\|_*$ in~\eqref{eq:joint} and~\eqref{eq:joint-knapsack} equal 1 in the instances generated.

\subsection{Performance Analysis} \label{subsec:results}

\newcommand{\Basic}{\texttt{Basic}\xspace}
\newcommand{\Improved}{\texttt{Improved}\xspace}
\newcommand{\Mixing}{\texttt{Mixing}\xspace}
\newcommand{\Path}{\texttt{Path}\xspace}
\newcommand{\MixingPath}{\texttt{Mixing+Path}\xspace}

In this section, we summarize our experiments with radius $\theta = \theta_1,\dots,\theta_{10}$ and  the number of samples $N=100, {1000}, 3000$. We compare the following five formulations:
\begin{description}
	\item[\Basic:] the basic formulation~\eqref{eq:joint} given by~\citet{chen2018data} with the big-$M$ value computed as in Remark \ref{rem:bigM-joint},
	\item[\Improved:] the improved formulation~\eqref{eq:joint-k-compact},
	\item[\Mixing:] the improved formulation~\eqref{eq:joint-k-compact} with the mixing inequalities~\eqref{eq:mixing-rhs},
	\item[\Path:] the improved formulation~\eqref{eq:joint-k-compact} with the path inequalities~\eqref{eq:path}, and
	\item[\MixingPath:] the improved formulation~\eqref{eq:joint-k-compact} with both mixing~\eqref{eq:mixing-rhs} and path inequalities \eqref{eq:path}.
\end{description}

In Tables \ref{tab:F5-D50-timegapcuts} and \ref{tab:F10-D100-timegapcuts}, the following statistics are reported:
\begin{description}
	\item[Time(Gap):] the average solution time (in seconds measured externally from CPLEX by C++) of the instances that were solved to optimality, and, in parentheses, the average of the final optimality gap of the instances that were not solved to optimality within the CPLEX time limit.  The optimality gap is computed as $(UB-LB)/LB*100$ where $UB$ and $LB$ respectively are the objective values of the best feasible solution and the best lower bound value at the corresponding time of the solution process. A `*' in a Time or Gap entry indicates that either no instance was solved to optimality or all instances were solved to optimality within the CPLEX time limit so that there were no instances for measuring the corresponding statistic.
	
	We also report in this column the number
	of instances solved to optimality within the CPLEX time limit, $s$, and the number of instances for which a feasible solution was found $f$. Since for most cases, we observed that $s=f=10$, we add $[s/f]$ in front of the Time(Gap) statistic only when $s<10$ or $f < 10$.
	
	A `n/a' entry for the Time(Gap) statistic denotes when no feasible solution was found in any of the 10 instances.
	
	\item[Cuts:] the average number of cuts added for \Mixing, \Path, and \MixingPath. For \MixingPath, the `Cuts' are broken down into the number of mixing inequalities and the number of path inequalities added respectively.
\end{description}

In Tables \ref{tab:F5-D50-root} and \ref{tab:F10-D100-root}, the following statistics are reported:
\begin{description}
	\item[R.time:] the average time spent (in seconds measured externally from CPLEX by C++) at the root node of the branch-and-bound tree over all instances. A `n/a' entry indicates that no feasible solution was found in any of the 10 instances within the CPLEX time limit.
	\item[R.gap:] the final optimality gap at the root node of the branch-and-bound tree. A `n/a' entry indicates that no solution was found in any of the 10 instances within the CPLEX time limit.
\end{description}

The results in all tables highlight that when the radius $\theta$ is small, the resulting problems are much harder to solve. This was also reported in~\citet{chen2018data}.

When $N=100$, {$F=5, D=50$,}  and $\theta=\theta_1$, Table~\ref{tab:F5-D50-timegapcuts} shows that \Basic does not finish within the CPLEX time limit of 3600 seconds for any of the 10 randomly generated instances and terminates with a 1.16\% optimality gap, on average. In contrast, \Improved solves all instances to optimality in under 5 seconds on average. In fact, \Improved is so effective that it does not leave much room for improvement for the additional valid inequalities in \Mixing, \Path and \MixingPath, and the separation of the valid inequalities results in a slight increase in the solution time in most cases. Nevertheless, the latter three formulations solve all instances to optimality in under 9 seconds on average. When $\theta\geq\theta_2$, \Basic solves all instances to optimality in under 27 seconds on average, but all other formulations solve in under 0.06 seconds on average. {In Appendix~\ref{sec:app}, we provide supplementary results that show that the mixing and path inequalities are very effective when added to \Basic, but \Improved without any valid inequalities performs better than \Basic with these inequalities.}

{To test the scalability of our proposed approaches with respect to the number of scenarios, in Table~\ref{tab:F5-D50-timegapcuts}}, we {also} report the performance of the five formulations on instances with {$N=1000$ and } $N=3000$, respectively for  {$F=5, D=50$}. {In the out-of-sample tests for these instances reported in \cite{chen2018data}, for $N=1000$, the authors state the difficulty of solving the problem to proven optimality and report their results with a not  necessarily optimal solution obtained after a couple of minutes of computing. Our results in Table~\ref{tab:F5-D50-timegapcuts} show that   our proposed formulation  provides provably optimal solutions  within less than a minute in all instances but those with $\theta_1$.   
Furthermore, as indicated in Table~\ref{tab:F5-D50-timegapcuts}, we can scale up the number of scenarios even further.}  In the experiments with  $N=3000$, we observe that even for the largest value of $\theta_{10}$, 
\Basic was unable to solve any of the ten instances within the CPLEX time limit, while all of the new formulations \Improved, \Mixing, \Path and \MixingPath solved all instances to optimality for $\theta \geq \theta_3$ with an average time of at most 20 seconds. For $\theta \leq \theta_2$, our new formulations did not manage to solve any instances to optimality, but did reduce the average optimality gap to at most 0.8\%. In contrast, \Basic failed to solve all instances for $\theta \leq \theta_2$, and in fact did not even find a feasible solution within the time limit for many instances. For the instances where a feasible solution was found, the gap remained quite large at $\approx 70\%$ at termination.

{We also test the scalability of various methods with respect to the number of original decision variables in the original problem. To this end, we let $F=10, D=100$  and report our results in Table~\ref{tab:F10-D100-timegapcuts} for $N=100,1000,3000$. (Note that an increase in $D$ implies an increase in the dimension of the random data, which may in turn require an increase in sample size to ensure the same out-of-sample performance.) Comparing Tables~\ref{tab:F5-D50-timegapcuts} and \ref{tab:F10-D100-timegapcuts}, we see that not surprisingly, the problems with a larger number of original decision variables are harder to solve.  Fewer instances can be solved to optimality with \Basic. In fact, \Basic cannot even obtain a feasible solution after an hour of computing for instances with $F=10, D=100, N=3000.$  In contrast,  \Improved solves  all instances  to optimality in less than a minute except for the ones with $\theta_1$ when $N=100,1000$ and with $\theta\leq\theta_5$ in the case of $N=3000$. Indeed, we observe a very slight increase in solution times for our proposed formulations in the case of $F=10, D=100$ in contrast to $F=5, D=50$, and all trends reported for $F=5, D=50$ remain the same in the case of $F=10, D=100.$ }

Tables  \ref{tab:F5-D50-timegapcuts} and \ref{tab:F10-D100-timegapcuts} also give us insight into the marginal effect of each class of inequalities. We observe that mixing inequalities are only generated for $\theta=\theta_1$ (plus a total of 3 inequalities across all instances when $\theta=\theta_2$ and $N=100$). When the radius of the Wasserstein ball is small, the nominal region $\cX_{\SAA}(\cS)$ is a better approximation for the distributionally robust region $\cX_{\DR}(\cS)$. Since mixing inequalities are valid for $\cX_{\SAA}(\cS)$, it is expected that they have stronger effects as for smaller radius $\theta$, thus the observed behavior is not surprising. As mentioned above, when $N=100$, the \Improved formulation is already so effective that separating inequalities slightly increases solution times. However, when $N=3000$, $F=5$ and $D=50$, from Table \ref{tab:F5-D50-timegapcuts}, we see that while \Improved is still quite effective, and manages to reduce the gap to $0.78\%$ for $\theta=\theta_1$, separating the valid inequalities reduces this further ($0.52\%$ for \Mixing, $0.62\%$ for \Path, and $0.48\%$ for \MixingPath). We see a similar phenomenon for $\theta = \theta_2$, but only path inequalities are separated. {Similar observations can be made for $N=3000$, $F=10$, $D=100$.}

Tables \ref{tab:F5-D50-root} {and  \ref{tab:F10-D100-root}} provide further information on the performance of the formulations at the root node  of the branch-and-bound tree.  
From Table  \ref{tab:F5-D50-root}, we see that the root gap of the new formulations \Improved, \Mixing, \Path and \MixingPath are at most $0.8\%$ on average for the most difficult regime $\theta=\theta_1,\theta_2$ and $N=3000$. This is significantly better than the root gap of \Basic.
In fact, in our experiments, we have observed that for these types of distributionally robust CCPs, the  branch-and-bound process is very ineffective in terms of reducing the remaining gap, with only a small difference between root gap and final gap. This also highlights the importance of starting off with very strong formulations. In fact, we observed that for $\theta \geq \theta_2$, the gap threshold of $0.01\%$ is achieved at the root node for our improved formulations.

\section{Conclusion}\label{sec:conclusion}

This paper studies in detail the formulation for distributionally robust chance-constrained programs under Wasserstein ambiguity, focusing on the case of linear safety sets with right-hand side uncertainty. We reveal a hidden connection with nominal chance-constraints, and provide a class of valid inequalities which are exactly the mixing inequalities for the nominal chance constraint. We also adapt the quantile strengthening technique to the distributionally robust setting, making one set of constraints \eqref{joint:bigM2} in the original MIP formulation \eqref{eq:joint} big-$M$-free. We then exploit the CVaR interpretation of the distributionally robust constraint \eqref{eq:cc-distance-cvar} to provide an improved formulation \eqref{eq:joint-k-compact} with significantly fewer constraints. Finally, we uncover a mixed-integer substructure which has been studied in the context of robust 0-1 programming \citet{atamturk2006robust}, and provide second class of valid inequalities based on this connection.

Our computational results demonstrate the benefit of our improved formulation and valid inequalities. Solution times are drastically reduced and larger problems  can now be solved in seconds, e.g.,  problems with thousands of scenarios (rather than hundreds). 

\textbf{Acknowledgments.}
This paper is in memory of Shabbir Ahmed, whose fundamental contributions on mixing sets, chance-constrained programming and distributionally robust optimization we build upon. We thank the two referees and the AE for their suggestions that improved the exposition. 
This research is supported, in part, by ONR grant N00014-19-1-2321, by the Institute for Basic Science (IBS-R029-C1), Award N660011824020 from the DARPA Lagrange Program and NSF Award 1740707.

\begin{landscape}

\begin{table}
	\centering
	\caption{Results for $F=5$, $D=50$, $\epsilon=0.1$}
	\label{tab:F5-D50-timegapcuts}
	\begin{tabular}{ll|r|r|rr|rr|rr}
		\toprule
		&               & \multicolumn{1}{c}{\texttt{Basic}} &  \multicolumn{1}{c}{\texttt{Improved}} & \multicolumn{2}{c}{\texttt{Mixing}} & \multicolumn{2}{c}{\texttt{Path}} & \multicolumn{2}{c}{\texttt{Mixing+Path}} \\
		$N$ & $\theta$ &                 Time(Gap) &                Time(Gap) &                Time(Gap) &    Cuts &                Time(Gap) &    Cuts &                Time(Gap) &           Cuts \\
		\midrule
		\multirow{10}{*}{100} & $\theta_{1}$ &   [0/10] \ \hfill *(1.16) &                  4.29(*) &                  6.64(*) &    54.7 &                  7.31(*) &   287.6 &                  8.40(*) &     41.7/274.6 \\
		& $\theta_{2}$ &                  26.58(*) &                  0.04(*) &                  0.05(*) &     0.3 &                  0.06(*) &    88.2 &                  0.06(*) &       0.3/88.2 \\
		& $\theta_{3}$ &                   4.27(*) &                  0.04(*) &                  0.04(*) &     0.0 &                  0.04(*) &    73.8 &                  0.05(*) &       0.0/73.8 \\
		& $\theta_{4}$ &                   3.36(*) &                  0.03(*) &                  0.03(*) &     0.0 &                  0.03(*) &    44.4 &                  0.03(*) &       0.0/44.4 \\
		& $\theta_{5}$ &                   1.34(*) &                  0.03(*) &                  0.03(*) &     0.0 &                  0.03(*) &    42.4 &                  0.03(*) &       0.0/42.4 \\
		& $\theta_{6}$ &                   1.17(*) &                  0.03(*) &                  0.03(*) &     0.0 &                  0.03(*) &    43.1 &                  0.03(*) &       0.0/43.1 \\
		& $\theta_{7}$ &                   1.04(*) &                  0.03(*) &                  0.03(*) &     0.0 &                  0.03(*) &    40.3 &                  0.03(*) &       0.0/40.3 \\
		& $\theta_{8}$ &                   0.94(*) &                  0.03(*) &                  0.03(*) &     0.0 &                  0.03(*) &    40.2 &                  0.03(*) &       0.0/40.2 \\
		& $\theta_{9}$ &                   0.86(*) &                  0.03(*) &                  0.03(*) &     0.0 &                  0.03(*) &    36.4 &                  0.03(*) &       0.0/36.4 \\
		& $\theta_{10}$ &                   0.84(*) &                  0.02(*) &                  0.03(*) &     0.0 &                  0.03(*) &    36.0 &                  0.03(*) &       0.0/36.0 \\
		\cline{1-10}
		\multirow{10}{*}{1000} & $\theta_{1}$ &   [0/6] \ \hfill *(97.90) &  [0/10] \ \hfill *(0.59) &  [0/10] \ \hfill *(0.52) &   532.8 &  [0/10] \ \hfill *(0.44) &  2472.5 &  [0/10] \ \hfill *(0.44) &   381.0/2377.9 \\
		& $\theta_{2}$ &  [0/10] \ \hfill *(66.04) &                 36.84(*) &                 36.68(*) &     0.0 &                 40.41(*) &   114.3 &                 40.61(*) &      0.0/114.3 \\
		& $\theta_{3}$ &  [0/10] \ \hfill *(31.10) &                  9.15(*) &                  9.33(*) &     0.0 &                  9.23(*) &    82.9 &                  9.61(*) &       0.0/82.9 \\
		& $\theta_{4}$ &   [0/10] \ \hfill *(2.11) &                  7.73(*) &                  7.60(*) &     0.0 &                  7.74(*) &    61.6 &                  7.71(*) &       0.0/61.6 \\
		& $\theta_{5}$ &                 287.92(*) &                  6.12(*) &                  6.16(*) &     0.0 &                  6.20(*) &    58.5 &                  6.19(*) &       0.0/58.5 \\
		& $\theta_{6}$ &                 130.38(*) &                  5.83(*) &                  5.83(*) &     0.0 &                  5.95(*) &    49.0 &                  5.94(*) &       0.0/49.0 \\
		& $\theta_{7}$ &                 117.77(*) &                  5.75(*) &                  5.72(*) &     0.0 &                  5.77(*) &    48.9 &                  5.75(*) &       0.0/48.9 \\
		& $\theta_{8}$ &                  99.95(*) &                  5.58(*) &                  5.52(*) &     0.0 &                  5.65(*) &    49.0 &                  5.64(*) &       0.0/49.0 \\
		& $\theta_{9}$ &                  90.82(*) &                  5.27(*) &                  5.27(*) &     0.0 &                  5.34(*) &    49.0 &                  5.39(*) &       0.0/49.0 \\
		& $\theta_{10}$ &                  78.88(*) &                  4.95(*) &                  4.97(*) &     0.0 &                  5.04(*) &    49.1 &                  5.02(*) &       0.0/49.1 \\
		\cline{1-10}
		\multirow{10}{*}{3000} & $\theta_{1}$ &          [0/0] \hfill n/a &  [0/10] \ \hfill *(0.78) &  [0/10] \ \hfill *(0.52) &  2358.2 &  [0/10] \ \hfill *(0.62) &  6536.0 &  [0/10] \ \hfill *(0.48) &  1470.3/4228.1 \\
		& $\theta_{2}$ &   [0/5] \ \hfill *(69.56) &  [0/10] \ \hfill *(0.49) &  [0/10] \ \hfill *(0.49) &     0.0 &  [0/10] \ \hfill *(0.41) &  6324.6 &  [0/10] \ \hfill *(0.41) &     0.0/6102.2 \\
		& $\theta_{3}$ &   [0/4] \ \hfill *(48.65) &                 17.89(*) &                 17.62(*) &     0.0 &                 18.94(*) &   200.8 &                 18.29(*) &      0.0/200.8 \\
		& $\theta_{4}$ &   [0/4] \ \hfill *(15.01) &                 13.74(*) &                 13.22(*) &     0.0 &                 14.03(*) &    94.1 &                 13.94(*) &       0.0/94.1 \\
		& $\theta_{5}$ &   [0/10] \ \hfill *(1.11) &                 12.75(*) &                 12.60(*) &     0.0 &                 13.65(*) &    88.3 &                 13.55(*) &       0.0/88.3 \\
		& $\theta_{6}$ &   [0/10] \ \hfill *(0.58) &                 12.29(*) &                 12.21(*) &     0.0 &                 12.83(*) &    80.2 &                 12.68(*) &       0.0/80.2 \\
		& $\theta_{7}$ &   [0/10] \ \hfill *(0.48) &                 12.28(*) &                 12.01(*) &     0.0 &                 12.93(*) &    56.7 &                 12.95(*) &       0.0/56.7 \\
		& $\theta_{8}$ &   [0/10] \ \hfill *(0.40) &                 12.48(*) &                 12.20(*) &     0.0 &                 12.78(*) &    52.9 &                 12.65(*) &       0.0/52.9 \\
		& $\theta_{9}$ &   [0/10] \ \hfill *(0.37) &                 11.96(*) &                 11.81(*) &     0.0 &                 12.36(*) &    49.4 &                 12.52(*) &       0.0/49.4 \\
		& $\theta_{10}$ &   [0/10] \ \hfill *(0.28) &                  8.04(*) &                  8.19(*) &     0.0 &                  8.86(*) &    49.4 &                  8.94(*) &       0.0/49.4 \\
		\bottomrule
	\end{tabular}
\end{table}

\begin{table}
	\centering
	\caption{Results at root node for $F=5$, $D=50$, $\epsilon=0.1$}
	\label{tab:F5-D50-root}
	\begin{tabular}{ll|rr|rr|rr|rr|rr}
		\toprule
		&               & \multicolumn{2}{c}{\texttt{Basic}} & \multicolumn{2}{c}{\texttt{Improved}} & \multicolumn{2}{c}{\texttt{Mixing}} & \multicolumn{2}{c}{\texttt{Path}} & \multicolumn{2}{c}{\texttt{Mixing+Path}} \\
		$N$ & $\theta$ &         R.time &  R.gap &            R.time & R.gap &          R.time & R.gap &        R.time & R.gap &               R.time & R.gap \\
		\midrule
		\multirow{10}{*}{100} & $\theta_{1}$ &           5.67 &  90.53 &              0.19 &  0.34 &            0.21 &  0.35 &          0.50 &  0.31 &                 0.47 &  0.31 \\
		& $\theta_{2}$ &          11.22 &  65.19 &              0.04 &  0.00 &            0.05 &  0.00 &          0.06 &  0.00 &                 0.06 &  0.00 \\
		& $\theta_{3}$ &           2.98 &  38.94 &              0.04 &  0.00 &            0.04 &  0.00 &          0.04 &  0.00 &                 0.05 &  0.00 \\
		& $\theta_{4}$ &           2.66 &  13.35 &              0.03 &  0.00 &            0.03 &  0.00 &          0.03 &  0.00 &                 0.03 &  0.00 \\
		& $\theta_{5}$ &           1.32 &   0.04 &              0.03 &  0.00 &            0.03 &  0.00 &          0.03 &  0.00 &                 0.03 &  0.00 \\
		& $\theta_{6}$ &           1.17 &   0.00 &              0.03 &  0.00 &            0.03 &  0.00 &          0.03 &  0.00 &                 0.03 &  0.00 \\
		& $\theta_{7}$ &           1.04 &   0.00 &              0.03 &  0.00 &            0.03 &  0.00 &          0.03 &  0.00 &                 0.03 &  0.00 \\
		& $\theta_{8}$ &           0.94 &   0.00 &              0.03 &  0.00 &            0.03 &  0.00 &          0.03 &  0.00 &                 0.03 &  0.00 \\
		& $\theta_{9}$ &           0.86 &   0.00 &              0.03 &  0.00 &            0.03 &  0.00 &          0.03 &  0.00 &                 0.03 &  0.00 \\
		& $\theta_{10}$ &           0.84 &   0.00 &              0.02 &  0.00 &            0.03 &  0.00 &          0.03 &  0.00 &                 0.03 &  0.00 \\
		\cline{1-12}
		\multirow{10}{*}{1000} & $\theta_{1}$ &        2334.91 &  99.45 &            338.01 &  0.63 &          614.52 &  0.54 &       1968.43 &  0.45 &              2000.91 &  0.45 \\
		& $\theta_{2}$ &         275.76 &  73.64 &             37.57 &  0.01 &           36.76 &  0.01 &         40.65 &  0.00 &                40.48 &  0.00 \\
		& $\theta_{3}$ &         223.92 &  47.13 &              9.22 &  0.00 &            9.30 &  0.00 &          9.29 &  0.00 &                 9.59 &  0.00 \\
		& $\theta_{4}$ &         289.21 &  16.30 &              7.70 &  0.00 &            7.71 &  0.00 &          7.77 &  0.00 &                 7.77 &  0.00 \\
		& $\theta_{5}$ &         103.41 &   0.51 &              6.10 &  0.00 &            6.09 &  0.00 &          6.19 &  0.00 &                 6.18 &  0.00 \\
		& $\theta_{6}$ &          89.56 &   0.39 &              5.81 &  0.01 &            5.83 &  0.01 &          5.92 &  0.01 &                 5.86 &  0.01 \\
		& $\theta_{7}$ &          78.75 &   0.41 &              5.76 &  0.00 &            5.71 &  0.00 &          5.74 &  0.00 &                 5.80 &  0.00 \\
		& $\theta_{8}$ &          67.40 &   0.40 &              5.53 &  0.00 &            5.55 &  0.00 &          5.62 &  0.00 &                 5.65 &  0.00 \\
		& $\theta_{9}$ &          63.73 &   0.38 &              5.28 &  0.00 &            5.26 &  0.00 &          5.35 &  0.00 &                 5.36 &  0.00 \\
		& $\theta_{10}$ &          58.08 &   0.36 &              4.93 &  0.00 &            4.91 &  0.00 &          5.04 &  0.00 &                 5.03 &  0.00 \\
		\cline{1-12}
		\multirow{10}{*}{3000} & $\theta_{1}$ &            n/a &    n/a &             72.08 &  0.80 &         1931.99 &  0.52 &       3598.81 &  0.62 &              3601.05 &  0.48 \\
		& $\theta_{2}$ &        3144.09 &  70.41 &            134.46 &  0.55 &          109.72 &  0.55 &       3600.10 &  0.41 &              3600.22 &  0.41 \\
		& $\theta_{3}$ &        2952.26 &  51.31 &             17.89 &  0.01 &           17.62 &  0.01 &         18.94 &  0.01 &                18.29 &  0.01 \\
		& $\theta_{4}$ &        2684.77 &  15.72 &             13.74 &  0.01 &           13.22 &  0.01 &         14.03 &  0.01 &                13.94 &  0.01 \\
		& $\theta_{5}$ &        3181.43 &   1.14 &             12.75 &  0.00 &           12.60 &  0.00 &         13.65 &  0.00 &                13.55 &  0.00 \\
		& $\theta_{6}$ &        3176.11 &   0.63 &             12.29 &  0.00 &           12.21 &  0.00 &         12.83 &  0.00 &                12.68 &  0.00 \\
		& $\theta_{7}$ &        2958.81 &   0.55 &             12.28 &  0.01 &           12.01 &  0.01 &         12.93 &  0.01 &                12.95 &  0.01 \\
		& $\theta_{8}$ &        2876.49 &   0.47 &             12.48 &  0.01 &           12.20 &  0.01 &         12.78 &  0.01 &                12.65 &  0.01 \\
		& $\theta_{9}$ &        2781.77 &   0.45 &             11.96 &  0.01 &           11.81 &  0.01 &         12.36 &  0.01 &                12.52 &  0.01 \\
		& $\theta_{10}$ &        2439.69 &   0.41 &              8.04 &  0.01 &            8.19 &  0.01 &          8.86 &  0.01 &                 8.94 &  0.01 \\
		\bottomrule
	\end{tabular}
\end{table}

\end{landscape}
\begin{landscape}

\begin{table}
	\centering
	\caption{Results for $F=10$, $D=100$, $\epsilon=0.1$}
	\label{tab:F10-D100-timegapcuts}
	\begin{tabular}{ll|r|r|rr|rr|rr}
		\toprule
		&               & \multicolumn{1}{c}{\texttt{Basic}} &  \multicolumn{1}{c}{\texttt{Improved}} & \multicolumn{2}{c}{\texttt{Mixing}} & \multicolumn{2}{c}{\texttt{Path}} & \multicolumn{2}{c}{\texttt{Mixing+Path}} \\
		$N$ & $\theta$ &                 Time(Gap) &                Time(Gap) &                Time(Gap) &    Cuts &                Time(Gap) &    Cuts &                Time(Gap) &           Cuts \\
		\midrule
		\multirow{10}{*}{100} & $\theta_{1}$ &        [0/10] \ \hfill *(3.01) &                 10.46(*) &                 19.19(*) &    55.7 &                 29.36(*) &   228.2 &                 35.45(*) &    48.0/236.2 \\
		& $\theta_{2}$ &                       37.49(*) &                  0.11(*) &                  0.11(*) &     0.0 &                  0.12(*) &   171.3 &                  0.13(*) &     0.0/171.3 \\
		& $\theta_{3}$ &                       18.45(*) &                  0.10(*) &                  0.10(*) &     0.0 &                  0.11(*) &   146.7 &                  0.12(*) &     0.0/146.7 \\
		& $\theta_{4}$ &                       16.91(*) &                  0.09(*) &                  0.09(*) &     0.0 &                  0.10(*) &   123.2 &                  0.10(*) &     0.0/123.2 \\
		& $\theta_{5}$ &                        9.17(*) &                  0.09(*) &                  0.09(*) &     0.0 &                  0.09(*) &   119.0 &                  0.09(*) &     0.0/119.0 \\
		& $\theta_{6}$ &                        5.71(*) &                  0.06(*) &                  0.06(*) &     0.0 &                  0.07(*) &    84.7 &                  0.07(*) &      0.0/84.7 \\
		& $\theta_{7}$ &                        4.74(*) &                  0.06(*) &                  0.06(*) &     0.0 &                  0.07(*) &    86.3 &                  0.07(*) &      0.0/86.3 \\
		& $\theta_{8}$ &                        3.78(*) &                  0.06(*) &                  0.06(*) &     0.0 &                  0.07(*) &    74.5 &                  0.07(*) &      0.0/74.5 \\
		& $\theta_{9}$ &                        3.20(*) &                  0.06(*) &                  0.06(*) &     0.0 &                  0.06(*) &    69.9 &                  0.06(*) &      0.0/69.9 \\
		& $\theta_{10}$ &                        2.89(*) &                  0.05(*) &                  0.06(*) &     0.0 &                  0.06(*) &    54.8 &                  0.06(*) &      0.0/54.8 \\
		\cline{1-10}
		\multirow{10}{*}{1000} & $\theta_{1}$ &        [0/1] \ \hfill *(98.73) &  [0/10] \ \hfill *(0.45) &  [0/10] \ \hfill *(0.42) &   382.3 &  [0/10] \ \hfill *(0.39) &  2150.7 &  [0/10] \ \hfill *(0.39) &  249.6/2032.4 \\
		& $\theta_{2}$ &        [0/7] \ \hfill *(67.05) &                 27.02(*) &                 26.05(*) &     0.0 &                 37.06(*) &   399.8 &                 36.97(*) &     0.0/399.8 \\
		& $\theta_{3}$ &        [0/4] \ \hfill *(34.22) &                 14.44(*) &                 14.73(*) &     0.0 &                 15.55(*) &   231.1 &                 14.70(*) &     0.0/231.1 \\
		& $\theta_{4}$ &        [0/5] \ \hfill *(10.48) &                 10.18(*) &                 10.41(*) &     0.0 &                 12.13(*) &   186.3 &                 12.07(*) &     0.0/186.3 \\
		& $\theta_{5}$ &  [4/10] \ \hfill 2599.60(0.45) &                 10.37(*) &                 10.68(*) &     0.0 &                 11.66(*) &   186.1 &                 11.31(*) &     0.0/186.1 \\
		& $\theta_{6}$ &                      968.46(*) &                  3.14(*) &                  3.25(*) &     0.0 &                  3.29(*) &    96.3 &                  3.28(*) &      0.0/96.3 \\
		& $\theta_{7}$ &                      853.68(*) &                  2.91(*) &                  3.06(*) &     0.0 &                  3.48(*) &    96.4 &                  3.52(*) &      0.0/96.4 \\
		& $\theta_{8}$ &                      647.80(*) &                  3.08(*) &                  3.11(*) &     0.0 &                  3.12(*) &    96.3 &                  3.07(*) &      0.0/96.3 \\
		& $\theta_{9}$ &                      528.73(*) &                  2.78(*) &                  2.74(*) &     0.0 &                  2.81(*) &    96.2 &                  2.85(*) &      0.0/96.2 \\
		& $\theta_{10}$ &                      375.32(*) &                  2.58(*) &                  2.57(*) &     0.0 &                  2.60(*) &    96.5 &                  2.59(*) &      0.0/96.5 \\
		\cline{1-10}
		\multirow{10}{*}{3000} & $\theta_{1}$ &               [0/0] \hfill n/a &  [0/10] \ \hfill *(0.59) &  [0/10] \ \hfill *(0.48) &  1494.1 &  [0/10] \ \hfill *(0.46) &  6003.2 &  [0/10] \ \hfill *(0.44) &  948.3/5356.1 \\
		& $\theta_{2}$ &               [0/0] \hfill n/a &  [0/10] \ \hfill *(0.49) &  [0/10] \ \hfill *(0.49) &     0.0 &  [0/10] \ \hfill *(0.42) &  6117.0 &  [0/10] \ \hfill *(0.42) &    0.0/6029.7 \\
		& $\theta_{3}$ &               [0/0] \hfill n/a &                407.31(*) &                336.00(*) &     0.0 &                541.10(*) &  1422.3 &                642.69(*) &    0.0/1422.3 \\
		& $\theta_{4}$ &               [0/0] \hfill n/a &                158.82(*) &                158.30(*) &     0.0 &                148.37(*) &   386.3 &                159.23(*) &     0.0/386.3 \\
		& $\theta_{5}$ &               [0/0] \hfill n/a &                 64.38(*) &                 64.42(*) &     0.0 &                 99.60(*) &   293.8 &                100.18(*) &     0.0/293.8 \\
		& $\theta_{6}$ &               [0/0] \hfill n/a &                 33.52(*) &                 33.99(*) &     0.0 &                 35.09(*) &    98.4 &                 35.16(*) &      0.0/98.4 \\
		& $\theta_{7}$ &               [0/0] \hfill n/a &                 33.33(*) &                 33.73(*) &     0.0 &                 35.30(*) &    98.6 &                 35.30(*) &      0.0/98.6 \\
		& $\theta_{8}$ &               [0/0] \hfill n/a &                 34.34(*) &                 34.81(*) &     0.0 &                 36.85(*) &    98.7 &                 37.01(*) &      0.0/98.7 \\
		& $\theta_{9}$ &               [0/0] \hfill n/a &                 35.38(*) &                 35.31(*) &     0.0 &                 36.69(*) &    98.6 &                 36.63(*) &      0.0/98.6 \\
		& $\theta_{10}$ &               [0/0] \hfill n/a &                 26.28(*) &                 25.79(*) &     0.0 &                 26.19(*) &    98.8 &                 25.76(*) &      0.0/98.8 \\
		\bottomrule
	\end{tabular}
\end{table}

\begin{table}
	\centering
	\caption{Results at root node for $F=10$, $D=100$, $\epsilon=0.1$}
	\label{tab:F10-D100-root}
	\begin{tabular}{ll|rr|rr|rr|rr|rr}
		\toprule
		&               & \multicolumn{2}{c}{\texttt{Basic}} & \multicolumn{2}{c}{\texttt{Improved}} & \multicolumn{2}{c}{\texttt{Mixing}} & \multicolumn{2}{c}{\texttt{Path}} & \multicolumn{2}{c}{\texttt{Mixing+Path}} \\
		$N$ & $\theta$ &         R.time &  R.gap &            R.time & R.gap &          R.time & R.gap &        R.time & R.gap &               R.time & R.gap \\
		\midrule
		\multirow{10}{*}{100} & $\theta_{1}$ &          29.20 &  90.99 &              0.38 &  0.42 &            0.35 &  0.44 &          0.46 &  0.43 &                 0.50 &  0.43 \\
		& $\theta_{2}$ &          10.39 &  68.59 &              0.11 &  0.00 &            0.11 &  0.00 &          0.12 &  0.00 &                 0.13 &  0.00 \\
		& $\theta_{3}$ &          14.11 &  43.85 &              0.10 &  0.00 &            0.10 &  0.00 &          0.11 &  0.00 &                 0.12 &  0.00 \\
		& $\theta_{4}$ &          14.50 &  21.52 &              0.09 &  0.00 &            0.09 &  0.00 &          0.10 &  0.00 &                 0.10 &  0.00 \\
		& $\theta_{5}$ &           8.97 &   1.12 &              0.09 &  0.00 &            0.09 &  0.00 &          0.09 &  0.00 &                 0.09 &  0.00 \\
		& $\theta_{6}$ &           5.66 &   0.01 &              0.06 &  0.00 &            0.06 &  0.00 &          0.07 &  0.00 &                 0.07 &  0.00 \\
		& $\theta_{7}$ &           4.76 &   0.01 &              0.06 &  0.00 &            0.06 &  0.00 &          0.07 &  0.00 &                 0.07 &  0.00 \\
		& $\theta_{8}$ &           3.78 &   0.00 &              0.06 &  0.00 &            0.06 &  0.00 &          0.07 &  0.00 &                 0.07 &  0.00 \\
		& $\theta_{9}$ &           3.20 &   0.00 &              0.06 &  0.00 &            0.06 &  0.00 &          0.06 &  0.00 &                 0.06 &  0.00 \\
		& $\theta_{10}$ &           2.89 &   0.01 &              0.05 &  0.00 &            0.06 &  0.00 &          0.06 &  0.00 &                 0.06 &  0.00 \\
		\cline{1-12}
		\multirow{10}{*}{1000} & $\theta_{1}$ &        3542.86 &  99.48 &             29.81 &  0.50 &           35.95 &  0.45 &        148.13 &  0.41 &               157.22 &  0.41 \\
		& $\theta_{2}$ &        1185.36 &  72.76 &             27.02 &  0.01 &           26.05 &  0.01 &         37.06 &  0.00 &                36.97 &  0.00 \\
		& $\theta_{3}$ &         990.87 &  46.39 &             14.44 &  0.01 &           14.73 &  0.01 &         15.55 &  0.01 &                14.70 &  0.01 \\
		& $\theta_{4}$ &        1547.07 &  21.38 &             10.18 &  0.01 &           10.41 &  0.01 &         12.13 &  0.00 &                12.07 &  0.00 \\
		& $\theta_{5}$ &         570.64 &   0.84 &             10.37 &  0.00 &           10.68 &  0.00 &         11.66 &  0.00 &                11.31 &  0.00 \\
		& $\theta_{6}$ &         498.69 &   0.41 &              3.14 &  0.00 &            3.25 &  0.00 &          3.29 &  0.00 &                 3.28 &  0.00 \\
		& $\theta_{7}$ &         453.85 &   0.43 &              2.91 &  0.00 &            3.06 &  0.00 &          3.48 &  0.00 &                 3.52 &  0.00 \\
		& $\theta_{8}$ &         412.37 &   0.40 &              3.08 &  0.00 &            3.11 &  0.00 &          3.12 &  0.00 &                 3.07 &  0.00 \\
		& $\theta_{9}$ &         366.56 &   0.36 &              2.78 &  0.00 &            2.74 &  0.00 &          2.81 &  0.00 &                 2.85 &  0.00 \\
		& $\theta_{10}$ &         314.52 &   0.25 &              2.58 &  0.00 &            2.57 &  0.00 &          2.60 &  0.00 &                 2.59 &  0.00 \\
		\cline{1-12}
		\multirow{10}{*}{3000} & $\theta_{1}$ &            n/a &    n/a &            289.73 &  0.60 &          902.49 &  0.48 &       3599.15 &  0.46 &              3603.49 &  0.44 \\
		& $\theta_{2}$ &            n/a &    n/a &            456.10 &  0.52 &          368.56 &  0.52 &       3601.21 &  0.42 &              3597.16 &  0.42 \\
		& $\theta_{3}$ &            n/a &    n/a &            407.31 &  0.01 &          336.00 &  0.01 &        541.10 &  0.01 &               642.69 &  0.01 \\
		& $\theta_{4}$ &            n/a &    n/a &            158.82 &  0.01 &          158.30 &  0.01 &        148.37 &  0.01 &               159.23 &  0.01 \\
		& $\theta_{5}$ &            n/a &    n/a &             64.38 &  0.01 &           64.42 &  0.01 &         99.60 &  0.01 &               100.18 &  0.01 \\
		& $\theta_{6}$ &            n/a &    n/a &             33.52 &  0.01 &           33.99 &  0.01 &         35.09 &  0.01 &                35.16 &  0.01 \\
		& $\theta_{7}$ &            n/a &    n/a &             33.33 &  0.00 &           33.73 &  0.00 &         35.30 &  0.00 &                35.30 &  0.00 \\
		& $\theta_{8}$ &            n/a &    n/a &             34.34 &  0.00 &           34.81 &  0.00 &         36.85 &  0.00 &                37.01 &  0.00 \\
		& $\theta_{9}$ &            n/a &    n/a &             35.38 &  0.00 &           35.31 &  0.00 &         36.69 &  0.00 &                36.63 &  0.00 \\
		& $\theta_{10}$ &            n/a &    n/a &             26.28 &  0.00 &           25.79 &  0.00 &         26.19 &  0.00 &                25.76 &  0.00 \\
		\bottomrule
	\end{tabular}
\end{table}

\end{landscape}

\appendix
\newpage

\section{Supplementary Numerical Results} \label{sec:app}

In Table \ref{tab:supp-F5-D50} we report the performance of the basic formulation~\eqref{eq:joint} when combined with mixing~\eqref{eq:mixing-rhs} and path inequalities \eqref{eq:path} for $F=5$, $D=50$, $\epsilon=0.1$. These  results highlight that mixing and path inequalities are indeed useful when applied to the basic formulation, without all the other enhancements we propose.
	
We see that for $N=100$, \texttt{Basic+Mixing+Path} solves all instances (and with slightly quicker times on average) whereas \texttt{Basic} in Table \ref{tab:F5-D50-timegapcuts} does not manage to solve any instances for $\theta_1$. For $N=1000$, we again see an improvement in the number of instances solved when using the inequalities, but for larger $\theta_8,\theta_9,\theta_{10}$, times are slightly slower. We believe this is due to the extra time required to solve the larger LP relaxations as a result of adding cuts. For $N=3000$, we now see that \texttt{Basic+Mixing+Path} is unable to find a feasible integer solution within one hour for a larger number of instances than \texttt{Basic} (we again believe this is due to larger LP relaxations), but whenever it does, it often solves to optimality, which \texttt{Basic} never does. 

In contrast to \texttt{Improved}, there are more cuts generated for \texttt{Basic} for all $\theta$ values. Nevertheless, as expected, the performance achieved by \texttt{Basic+Mixing+Path} is still worse than the performance of our improved formulation \eqref{eq:joint-k-compact}, with or without inequalities.

\begin{table}[h!]
	\centering
	\caption{Supplementary results for $F=5$, $D=50$, $\epsilon=0.1$}
	\label{tab:supp-F5-D50}
	\begin{tabular}{ll|rrrr}
		\toprule
		&               & \multicolumn{4}{c}{\texttt{Basic+Mixing+Path}} \\
		$N$ & $\theta$ & Time(Gap) &   R.time & R.gap & Cuts \\
		\midrule
		\multirow{10}{*}{100} & $\theta_{1}$ &                      36.64(*) &     2.77 &  0.30 &    116.0/386.2 \\
		& $\theta_{2}$ &                       0.61(*) &     0.61 &  0.00 &     65.3/148.7 \\
		& $\theta_{3}$ &                       0.64(*) &     0.64 &  0.00 &     59.5/128.5 \\
		& $\theta_{4}$ &                       0.53(*) &     0.53 &  0.00 &     57.7/118.2 \\
		& $\theta_{5}$ &                       0.56(*) &     0.56 &  0.00 &     57.1/107.9 \\
		& $\theta_{6}$ &                       0.53(*) &     0.53 &  0.00 &     55.5/101.0 \\
		& $\theta_{7}$ &                       0.43(*) &     0.43 &  0.00 &      54.0/97.8 \\
		& $\theta_{8}$ &                       0.46(*) &     0.46 &  0.00 &      53.2/96.9 \\
		& $\theta_{9}$ &                       0.51(*) &     0.51 &  0.00 &      52.7/95.6 \\
		& $\theta_{10}$ &                       0.48(*) &     0.48 &  0.00 &      51.7/92.4 \\
		\cline{1-6}
		\multirow{10}{*}{1000} & $\theta_{1}$ &        [0/9] \ \hfill *(0.45) &  2532.66 &  0.45 &  1047.7/3462.8 \\
		& $\theta_{2}$ &                     272.45(*) &   272.45 &  0.00 &   320.0/1081.3 \\
		& $\theta_{3}$ &                     142.32(*) &   142.32 &  0.00 &    220.2/620.2 \\
		& $\theta_{4}$ &                     125.95(*) &   125.95 &  0.00 &    196.1/570.2 \\
		& $\theta_{5}$ &                     111.84(*) &   111.84 &  0.00 &    186.4/521.6 \\
		& $\theta_{6}$ &                     106.86(*) &   106.86 &  0.00 &    172.4/499.5 \\
		& $\theta_{7}$ &                     100.38(*) &   100.38 &  0.00 &    162.2/461.5 \\
		& $\theta_{8}$ &                     103.55(*) &   103.55 &  0.00 &    155.5/496.6 \\
		& $\theta_{9}$ &                     109.29(*) &   109.29 &  0.00 &    144.7/464.5 \\
		& $\theta_{10}$ &                     107.14(*) &   107.14 &  0.00 &    136.5/462.2 \\
		\cline{1-6}
		\multirow{10}{*}{3000} & $\theta_{1}$ &              [0/0] \hfill n/a &      n/a &   n/a &            n/a \\
		& $\theta_{2}$ &              [0/0] \hfill n/a &      n/a &   n/a &            n/a \\
		& $\theta_{3}$ &              [0/0] \hfill n/a &      n/a &   n/a &            n/a \\
		& $\theta_{4}$ &  [3/6] \ \hfill 3334.13(0.02) &  3464.71 &  0.01 &   425.2/1091.0 \\
		& $\theta_{5}$ &  [4/6] \ \hfill 3364.70(0.03) &  3440.82 &  0.01 &   368.2/1120.7 \\
		& $\theta_{6}$ &     [3/3] \ \hfill 3116.18(*) &  3116.18 &  0.00 &   320.3/1188.0 \\
		& $\theta_{7}$ &     [2/2] \ \hfill 3304.96(*) &  3304.96 &  0.01 &    292.0/912.0 \\
		& $\theta_{8}$ &  [1/2] \ \hfill 3189.13(0.03) &  3381.76 &  0.01 &   232.0/1176.0 \\
		& $\theta_{9}$ &     [2/2] \ \hfill 3300.93(*) &  3300.93 &  0.00 &   200.0/1134.0 \\
		& $\theta_{10}$ &     [7/7] \ \hfill 2585.16(*) &  2585.16 &  0.00 &   205.1/1429.6 \\
		\bottomrule
	\end{tabular}
\end{table}

\end{document}